
\documentclass[12pt, reqno, twoside, letterpaper]{amsart}
\usepackage{prime-gaps}


\title[Limit points of the sequence of normalized prime gaps]
      {On limit points of the sequence \break 
       of normalized prime gaps}

\author[W. D. Banks]{William D. Banks}

\address{Department of Mathematics, 
         University of Missouri, 
         Columbia MO, USA.}

\email{bankswd@missouri.edu}

\author[T. Freiberg]{Tristan Freiberg}

\address{Department of Mathematics, 
         University of Missouri, 
         Columbia MO, USA.}

\email{freibergt@missouri.edu}

\author[J. Maynard]{James Maynard}

\address{Magdalen College, 
         Oxford, UK.}

\email{maynard@maths.ox.ac.uk}

\date{\today}


\begin{document}


\begin{abstract}
Let  $p_n$  denote the $n$th smallest prime number, and let  $\LP$ 
denote the set of limit points of the sequence 
$
\Br{(p_{n+1} - p_n)/\log p_n}_{n = 1}^{\infty}
$
of normalized differences between consecutive primes.
We show  that  for $k = 9$ and for any sequence of $k$ nonnegative 
real numbers  $\beta_1  \le \beta_2 \le \cdots  \le  \beta_k$,  at 
least one of the numbers $\beta_j - \beta_i$ ($1 \le i < j \le k$) 
belongs to $\LP$.
It follows that at least $12.5\%$ of all nonnegative  real numbers 
belong to $\LP$.
\end{abstract}

\maketitle


\section{Introduction}
 \label{sec:1}
Let  $p_1 = 2 < p_2 = 3 < p_3 = 5 < \cdots$ be the sequence of all 
primes.
The prime  number  theorem  asserts that  $p_n \sim n\log p_n$  as 
$n \to \infty$, hence the $n$th prime gap 
\[
 d_n = p_{n+1} - p_n
\]
is of length approximately $\log p_n$ on average.
It is  natural to ask how  often the  normalized  $n$th  prime gap
$d_n/\log p_n$ lies between two given numbers $\alpha$ and 
$\beta$.
For fixed $\beta > \alpha \ge 0$, heuristics based on Cr{\'a}mer's 
probabilistic model for primes lead to the conjecture that
\begin{align}
 \label{eq:1.1}
 N^{-1}
  \big|
       \big\{
           n \le N : d_n/\log p_n \in (\alpha,\beta] 
       \big\} 
  \big|
   \sim \int_{\alpha}^{\beta} \e^{-t} \dd{t}
    \qquad (N \to \infty).
\end{align}
Thus, one expects that the  normalized prime gaps are  distributed
according to a Poisson process, and the probability that  $d_n$ is 
close to $t\log p_n$ is about $\e^{-t}$.
We refer the reader to  the  expository  article  \cite{SOUND}  of 
Soundararajan  for   further   discussion  of   these  fascinating 
statistics.

Gallagher  \cite{GAL} has shown that  \eqref{eq:1.1}  follows from 
the truth of a suitable  uniform  version of the Hardy--Littlewood  
prime  $k$-tuple  conjecture;  however, such results must lie very 
deep.
An approximation to \eqref{eq:1.1} is the conjecture%
\footnote{%
Erd{\H o}s \cite[p.4]{ERD} wrote: ``It seems certain that 
$d_n/\log n$  is everywhere dense in the interval  $(0,\infty)$.''
}
of Erd{\H o}s \cite{ERD} that if $\LP$  is the set of limit points 
of the sequence 
$
\Br{d_n/\log p_n}_{n = 1}^{\infty},
$
then $\LP = [0,\infty]$.
It had already been established by Westzynthius \cite{WES} in 1931 
that 
\[
 \limsup_{n \to \infty} \frac{d_n}{\log p_n} = \infty.
\]
In 2005, the groundbreaking work of 
Goldston--Pintz--Y{\i}ld{\i}r{\i}m  \cite{GPY} established for the 
first time that
\[
 \liminf_{n \to \infty} \frac{d_n}{\log p_n} = 0.
\]
Hence, $0 \in \LP$  and $\infty \in \LP$, but no other limit point 
of $\LP$ is known at present.

The prime number theorem implies the existence of a limit point in 
$\LP$ that is less than or equal to $1$.
Erd{\H o}s \cite{ERD} and Ricci  \cite{RIC} were able to show that 
$\LP$ has positive Lebesgue  measure, but were unable to show that  
$\LP$ contains a limit point greater than $1$.
Hildebrand  and  Maier \cite{HM} were the first to show that $\LP$ 
contains a limit point greater than $1$.
Indeed, they  showed that  there is a positive constant  $c$  such 
that $\lambda([0,T] \cap \LP) \ge c\,T$ holds for all sufficiently 
large  $T$, where $\lambda$ denotes the Lebesgue measure on $\RR$, 
and hence that $\LP$  contains arbitrarily large limit points.
In  fact,  Hildebrand and Maier proved an $m$-dimensional analogue 
of this  result  for  the  limit  points of   ``chains''   of  $m$ 
consecutive gaps between primes (see Theorem \ref{thm:1.3} below). 

Using the recent breakthrough work of Zhang \cite{ZHA} on  bounded 
gaps between  primes, Pintz   \cite{PIN} has shown that there is a 
small (ineffective) positive constant $c$ such that 
$\LP \supseteq [0,c]$.
Most  recently, Goldston  and  Ledoan  \cite{GL}  have  shown that 
Erd{\H o}s'   method  yields   infinitely  many  limit points   in 
intervals of the form  $[(1/\mathcal{C})(1 - (1/M) - \epsilon),M]$
for any $M > 1$,  where  $\mathcal{C}$  is an  overestimate in the 
sieve  upper  bound for the number of generalized twin primes (one 
can take $\mathcal{C} = 4$).
Further, Goldston  and Ledoan have shown that there are infinitely 
many limit points in intervals such as $[1/2000,3/4]$.

In this paper, we prove the following.

\begin{theorem}
 \label{thm:1.1}
Let $d_n = p_{n+1} - p_n$, where  $p_n$ denotes the $n$th smallest 
prime, and let $\LP$ be the set of limit points of 
$
\Br{d_n/\log p_n}_{n = 1}^{\infty}.
$
For any sequence of $k = 9$ nonnegative real numbers 
$\beta_1 \le \beta_2 \le \cdots \le \beta_k$, we have
\begin{align}
 \label{eq:1.2}
  \big\{ \beta_j - \beta_i : 1 \le i < j \le k \big\} \cap \LP
   \ne \emptyset. 
\end{align}
\end{theorem}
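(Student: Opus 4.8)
The plan is to combine the Maynard--Tao multidimensional sieve (which produces, in any admissible $k$-tuple, infinitely many translates containing at least $2$ primes among $k$ linear forms, provided $k$ is large enough) with the Hildebrand--Maier result on limit points of chains of consecutive prime gaps. Given nonnegative reals $\beta_1 \le \cdots \le \beta_k$, the key observation is that if \emph{none} of the differences $\beta_j - \beta_i$ lies in $\LP$, then each such difference has a neighborhood containing only finitely many normalized gaps $d_n/\log p_n$ with $n$ indexing a prime in that range; this will force the $\beta_i$ to be ``generic'' in a way incompatible with the existence of large clusters of primes whose normalized positions track the $\beta_i$.

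Concretely, here is how I would carry it out. First, choose $k=9$ and apply the Maynard--Tao machinery: for a suitable admissible set of linear forms $\{L_i(n) = a_i n + b_i\}$ with $1 \le i \le k$, there are infinitely many $n$ for which at least two of the $L_i(n)$ are prime. Second --- and this is the crucial device --- rather than using a fixed admissible tuple, one allows the shape of the tuple to vary: the spacings $b_i$ are scaled so that, along a subsequence, the primes produced occupy positions $p_{n_i}$ with $(p_{n_i} - p_{n_1})/\log p_{n_1}$ approximating $\beta_i$ for all $i$, using the Hildebrand--Maier theorem (Theorem~\ref{thm:1.3}) to guarantee that such approximate configurations of $m=k-1$ consecutive-gap-chains actually occur with the right normalization. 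Third, among the $k$ primes clustered near the scaled positions $\beta_1, \dots, \beta_k$, the Maynard--Tao count forces two of them, say at indices corresponding to $\beta_i$ and $\beta_j$ with $i<j$, to be \emph{consecutive} primes (or, with more care, one extracts a pair of consecutive primes whose gap, normalized, is close to $\beta_j - \beta_i$). Letting the approximation parameters tend to zero along the infinitely many available $n$, the normalized gap $d_m/\log p_m$ for the relevant $m$ converges to $\beta_j - \beta_i$, placing that difference in $\LP$. Since we assumed no difference lies in $\LP$, this is a contradiction, proving \eqref{eq:1.2}.

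The main obstacle is the third step: guaranteeing that two of the $k$ primes in the cluster are genuinely \emph{consecutive}. The Maynard--Tao theorem gives at least two primes among $k$ forms, but a priori there could be unwanted primes sitting between them. The standard fix is a counting/sieve argument: one weights the count to detect configurations where a specified pair of forms are both prime \emph{and} all integers strictly between the corresponding values are composite, using a Selberg-type upper bound sieve to control the number of potential intervening primes; one needs the main term (from the Maynard--Tao weights detecting two primes) to dominate the error from possible intervening primes summed over all $O(k^2)$ candidate pairs. Making $k=9$ work --- rather than some larger value --- is exactly where the numerical optimization of the sieve weights, together with a careful choice of the admissible tuple's geometry so that the $\beta_i$-constraints are compatible with admissibility, must be pushed; I expect this to require the explicit constants from the Maynard--Tao variational problem (the quantity $M_k$ and the associated functionals) combined with a clean version of the Hildebrand--Maier input, and the delicate point is simultaneously satisfying the admissibility of the scaled tuple and the density lower bound for chains with the prescribed normalized spacings.
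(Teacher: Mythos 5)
Your plan breaks down at exactly the point you flag as the ``main obstacle,'' and the proposed fix does not work. Since you want the two primes to sit at positions $\approx\beta_i\log N$ and $\approx\beta_j\log N$, the integers strictly between them occupy an interval of length $\asymp\log N$, and by the prime number theorem such intervals contain on average $\asymp 1$ primes (indeed the whole window $(n,n+z]$ with $z$ slightly larger than $\log N$ is expected to contain an unbounded number). A Selberg-type upper bound sieve can only bound the number of intervening primes from above; it cannot show they are absent, and if you instead insert a penalty term $-\sum_{h\le z,\,h\notin\A}\ind{\PP}(n+h)$ into the Maynard--Tao weighted sum, the $\asymp\log N$ candidate positions each contribute a weighted count that is only a factor $\asymp 1/B$ smaller than the main term, so their total swamps it. The paper's solution is structural, not analytic: an Erd\H{o}s--Rankin type construction (Lemma \ref{lem:5.2}) chooses the residue class $b\bmod W$, with $W=\prod_{p\le \epsilon\log N}p\approx N^{\epsilon'}$, so that \emph{every} integer in $(n,n+z]$ outside the tuple $\A(n)$ is divisible by some $p\mid W$, hence composite; the primes found in $\A(n)$ are then automatically consecutive. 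This forces a version of Maynard--Tao that is uniform in a modulus $W$ of size a power of $N$ and in a tuple whose elements grow like $\log N$, which in turn requires the modified Bombieri--Vinogradov theorem for smooth moduli divisible by $W$ (Theorem \ref{thm:4.2}), resting on Chang's character-sum bound and a treatment of a possible Siegel zero (Lemma \ref{lem:4.1}). None of this machinery is replaceable by the sieve upper bound you sketch.

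Two further points. First, your route to $k=9$ --- ``numerical optimization of $M_k$ for $9$ forms'' --- cannot succeed: detecting two primes among $9$ forms at level of distribution $1/2$ would need $M_9>4$, whereas $M_9<\log 9<2.2$. In the paper, $9=8m+1$ with $m=1$ arises from a different mechanism: one takes a \emph{large} admissible $k$-tuple, partitions it into $9$ blocks of size $k/9$ clustered at the scales $\beta_i\log N$, and adds to the Maynard--Tao sum a penalty counting pairs of primes inside the same block, bounded via the extra sieve estimate of Lemma \ref{lem:4.5}(iii) with constant $4+\Oh[\delta]$; positivity then forces primes in two \emph{different} blocks. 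Second, you invoke ``Hildebrand--Maier (Theorem \ref{thm:1.3})'' to guarantee configurations with prescribed normalized spacings, but Theorem \ref{thm:1.3} is this paper's own result (proved by the same method as Theorem \ref{thm:1.1}), and the actual Hildebrand--Maier theorem gives only positive measure of limit points, not prescribed spacings; as used, that step is circular. So the proposal, as written, is missing the central construction of the paper and the block-partition argument that makes $k=9$ possible.
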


We have the following corollary, which shows that at least 
$12.5\%$ of nonnegative real numbers belong to $\LP$.

\begin{corollary}
 \label{cor:1.2}
Let $\LP$ be as in Theorem \ref{thm:1.1}, and let $\lambda$ be the 
Lebesgue measure on $\RR$.
The following bound holds 
\textup{(}with an ineffective $\oh[1]$\textup{):}
\begin{align}
 \label{eq:1.3}
  \lambda([0,T] \cap \LP ) 
   \ge 
    (1 - \oh[1]) T/8 \qquad (T \to \infty).
\end{align}
The following effective bound also holds\textup{:}
\begin{align}
 \label{eq:1.4}
  \lambda([0,T] \cap \LP ) > T/22 \qquad (T > 0).
\end{align}
\end{corollary}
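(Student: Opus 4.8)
The plan is to deduce Corollary~\ref{cor:1.2} from Theorem~\ref{thm:1.1} by an entirely soft argument. First I would restate \eqref{eq:1.2} in contrapositive form: since $0\in\LP$, any tuple with $\beta_i=\beta_j$ for some $i<j$ automatically satisfies \eqref{eq:1.2}, so Theorem~\ref{thm:1.1} is equivalent to the statement that there is \emph{no} strictly increasing tuple $0\le\beta_1<\cdots<\beta_9$ with $\beta_j-\beta_i\notin\LP$ for all $1\le i<j\le 9$. I will use this repeatedly, together with the fact that $\LP$, being the set of subsequential limits of a real sequence, is closed and hence Lebesgue measurable; throughout, write $f(x)=\lambda([0,x]\cap\LP)$.

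For the ineffective bound \eqref{eq:1.3}, I would build a tuple of the forbidden type greedily. Put $\beta_1=0$; having chosen $0=\beta_1<\cdots<\beta_{j-1}$ with $\beta_s-\beta_r\notin\LP$ for all $r<s\le j-1$, the admissible choices for $\beta_j$ are exactly the points of
\[
 V_j=(\beta_{j-1},\infty)\setminus\bigcup_{i=1}^{j-1}(\beta_i+\LP),
\]
since $\beta_j\in V_j$ is equivalent to $\beta_j>\beta_{j-1}$ together with $\beta_j-\beta_i\notin\LP$ for every $i<j$. Run the construction, making any admissible choice at each step: if it ever reaches step $9$ it produces a tuple forbidden by Theorem~\ref{thm:1.1}, which is impossible, so it must get stuck at some step $j\le 9$, i.e.\ $V_j=\emptyset$, meaning
\[
 (\beta_{j-1},x]\subseteq\bigcup_{i=1}^{j-1}(\beta_i+\LP)\qquad(x>\beta_{j-1}).
\]
Taking Lebesgue measure, translating the $i$th piece by $-\beta_i$, and using the trivial inclusion $\LP\cap(\beta_{j-1}-\beta_i,\,x-\beta_i]\subseteq\LP\cap[0,x]$ (valid since $0\le\beta_i\le\beta_{j-1}<x$), I obtain $x-\beta_{j-1}\le(j-1)f(x)\le 8f(x)$ for all $x>\beta_{j-1}$, hence $\liminf_{x\to\infty}f(x)/x\ge 1/8$, which is \eqref{eq:1.3}. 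The $o(1)$ is ineffective because nothing controls $j$ or $\beta_{j-1}$.

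For the effective bound \eqref{eq:1.4}, I would use the cheapest forbidden tuples, the arithmetic progressions. For each $c>0$ the tuple $0,c,2c,\dots,8c$ has difference set exactly $\{c,2c,\dots,8c\}$, so Theorem~\ref{thm:1.1} furnishes an $m=m(c)\in\{1,\dots,8\}$ with $mc\in\LP$. Fix $T>0$; if $c\in(0,T/8]$ then $0<mc\le T$, so $mc\in\LP\cap(0,T]$ and hence $c\in m^{-1}\bigl(\LP\cap(0,T]\bigr)$. Therefore
\[
 (0,T/8]\subseteq\bigcup_{m=1}^{8}m^{-1}\bigl(\LP\cap(0,T]\bigr),
\]
and taking measures gives $T/8\le\Bigl(\sum_{m=1}^{8}m^{-1}\Bigr)\lambda(\LP\cap(0,T])=\tfrac{761}{280}\,f(T)$, that is, $f(T)\ge\tfrac{35}{761}T>\tfrac{1}{22}T$, the last inequality because $35\cdot22=770>761$. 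This is \eqref{eq:1.4}.

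I expect the only step requiring genuine care to be the measure computation in the greedy argument: verifying that the covering $(\beta_{j-1},x]\subseteq\bigcup_i(\beta_i+\LP)$ really forces $x-\beta_{j-1}\le(j-1)f(x)$, via the translations and the elementary inclusion above, and confirming that the greedily built $9$-tuple genuinely meets all $\binom{9}{2}$ constraints --- which it does, since $\beta_j$ is chosen only after $\beta_1,\dots,\beta_{j-1}$ and $\beta_j-\beta_i\notin\LP$ is imposed at that stage for every $i<j$. Everything else is routine bookkeeping.
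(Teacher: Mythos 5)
Your proof is correct and follows essentially the same route as the paper: a maximal (in your case, greedily built and then stuck) tuple whose translates of $\LP$ cover a half-line gives the ineffective $T/8$ bound, and applying Theorem~\ref{thm:1.1} to arithmetic progressions together with the dilation identity $\lambda(m^{-1}A)=m^{-1}\lambda(A)$ gives $f(T)\ge \tfrac{35}{761}T>T/22$, exactly as in the paper (which phrases the first step via the minimal $\kappa\le 9$ rather than a greedy construction, and keeps $\kappa$ in the second step, but the worst case $\kappa=9$ is what you use). The measure computations and constants match the paper's.
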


\begin{proof}
We first observe  that the set  $\LP$, being a countable number of 
unions and intersections of open balls, is Lebesgue measurable.

Now let  $\kappa \ge 2$  be the smallest integer such that for any 
sequence of $\kappa$ real numbers  
$\alpha_{\kappa} \ge \cdots \ge \alpha_1 \ge 0$, we have
\[
 \{\alpha_j - \alpha_i : 1 \le i < j \le \kappa\} 
  \cap \LP 
   \ne \emptyset.
\]
By  Theorem  \ref{thm:1.1} such a  $\kappa$  exists and is at most 
$9$. 
If $\kappa = 2$ then $\LP = [0,\infty]$.
If $\kappa  \ge  3$ then by minimality there is a sequence of real 
numbers 
$\hat{\alpha}_{\kappa - 1} \ge \cdots \ge \hat{\alpha}_1 \ge 0$ 
such that 
\[
 \{\hat{\alpha}_j - \hat{\alpha}_i : 1 \le i < j \le \kappa - 1\} 
  \cap \LP 
 = \emptyset.
\]
Then for any number $\alpha \ge \hat{\alpha}_{\kappa - 1}$, 
$
 \{\alpha - \hat{\alpha}_j : 1 \le j \le \kappa - 1\} 
  \cap \LP 
   \ne \emptyset,
$
that is, for any $T_2 \ge T_1 \ge \hat{\alpha}_{\kappa - 1}$, 
\[
 [T_1, T_2]
  = 
   {\textstyle \bigcup_{j = 1}^{\kappa - 1} } \hspace{3pt}
    \{ 
     \beta + \hat{\alpha}_j : 
      \beta 
       \in 
        [T_1 - \hat{\alpha}_j, T_2 - \hat{\alpha}_j] \cap \LP  
    \}. 
\]
Thus,  by  subadditivity  and  translation  invariance of Lebesgue 
measure, 
\begin{align*}
T_2 - T_1
 & \le \textstyle
    \sum_{j = 1}^{\kappa - 1} 
     \lambda([T_1 - \hat{\alpha}_j, 
              T_2 - \hat{\alpha}_j] \cap \LP)
      \le (\kappa - 1)\lambda([0,T_2] 
            \cap \LP).
\end{align*}
This gives \eqref{eq:1.3}.

With $\kappa$ as above we have
\[
 \{\alpha, 2\alpha,\ldots, (\kappa-1)\alpha\} 
  \cap \LP 
   \ne \emptyset
\]
for every real number $\alpha \ge 0$ (take 
$\hat{\alpha}_j = j\alpha$ for $1 \le j \le \kappa$).
For any  $T \ge 0$,  by subadditivity and positive  homogeneity of 
Lebesgue measure, we have 
\begin{align*} 
     \lambda([0,T]) 
    & \textstyle 
       \le
        \sum_{j=1}^{\kappa-1}\lambda([0,T] \cap j^{-1} \LP)  
      \textstyle =
       \sum_{j=1}^{\kappa-1}j^{-1}\lambda([0,jT] \cap \LP) \\
    & \textstyle \le
         \lambda([0,(\kappa-1)T] \cap \LP)
          \sum_{j=1}^{\kappa-1}j^{-1}.
\end{align*}
Replacing $T$ by $(\kappa-1)^{-1}T$ and recalling that 
$\kappa \le 9$, this gives \eqref{eq:1.4}.
\end{proof}

We actually prove the following more general result on ``chains'' 
of gaps between primes, for which Theorem \ref{thm:1.1} is a 
stronger version of the special case $m = 1$.

\begin{theorem}
 \label{thm:1.3}
Let $d_n = p_{n+1} - p_n$, where  $p_n$ denotes the $n$th smallest 
prime. 
Fix an  integer  $m \ge 2$, and  let $\LP_m$ be the  set of limit 
points in $[0,\infty]^{m}$ of 
\[
 \textstyle 
 \big\{
  \big(
   \frac{d_n}{\log p_n},
    \ldots,
     \frac{d_{n + m - 1}}{\log p_{n + m - 1}}
  \big)
  \big\}_{n = 1}^{\infty}.
\]
Given $\bb = (\beta_1,\ldots,\beta_k) \in \RR^k$, let $S_m(\bb)$ 
be the set
\[
    \Br{
         \br{\beta_{J(2)} - \beta_{J(1)},\ldots,
           \beta_{J(m+1)} - \beta_{J(m)}} :
          1 \le J(1) < \cdots < J(m+1) \le k
       }.
\]
For any  sequence of $k = 8m^2 + 8m$ nonnegative real numbers 
\[
 \beta_1 \le \beta_2 \le \ldots \le \beta_{8m^2 + 8m},
\]
we have
\begin{align}
 \label{eq:1.5} 
 S_m(\bb) \cap \LP_m \ne \emptyset.
\end{align}
\end{theorem}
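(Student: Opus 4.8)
The plan is to combine the Maynard--Tao multidimensional sieve machinery with a pigeonhole argument, exactly as one would expect given the statement. Fix $m \ge 2$ and a nondecreasing sequence of nonnegative reals $\beta_1 \le \cdots \le \beta_k$ with $k = 8m^2 + 8m$. The Maynard--Tao method produces, for any admissible $k$-tuple $\mathcal H = \{h_1,\ldots,h_k\}$ of integers and any $N$ large, infinitely many $n$ such that at least $m+1$ of the shifted values $n + h_1,\ldots,n+h_k$ are prime, provided $k$ is large enough in terms of $m$ (and $k = 8m^2 + 8m$ comfortably exceeds the known thresholds, e.g. $k \gg m^2 \mathrm e^{2m}$ is far too wasteful, but the paper only needs the weaker polynomial-in-$m$ bound coming from the optimized smooth weights). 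The idea is to choose the tuple $\mathcal H$ so that its entries are close to the dilated quantities $R\beta_1,\ldots,R\beta_k$ for a large dilation parameter $R$; then the $m+1$ primes found among $n + h_i$ will be \emph{consecutive} primes in a window of length $\asymp R\log N$, and the consecutive gaps, divided by $\log N$, will be close to successive differences $\beta_{J(i+1)} - \beta_{J(i)}$.

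First I would set up the admissible tuple. Given a scale $R$, let $h_i$ be an integer within $O(1)$ of $R\beta_i$, chosen so that $\mathcal H = \{h_1,\ldots,h_k\}$ is admissible (this is possible by a standard argument: perturb by multiples of a primorial, or intersperse the $h_i$ among residue classes avoided by small primes; admissibility costs nothing asymptotically). Then $h_j - h_i = R(\beta_j - \beta_i) + O(1)$ for all $i < j$. Second, apply the Maynard--Tao theorem to $\mathcal H$: since $k = 8m^2 + 8m$ is above the threshold for catching $m+1$ primes, there are infinitely many $n$ with $m+1$ of the $n+h_i$ prime. For each such $n$, list the primes found as $n + h_{J(1)} < \cdots < n + h_{J(m+1)}$; between consecutive ones there may be further primes $n + \ell$ with $\ell \notin \mathcal H$, but the crucial point is a \emph{sieve upper bound}: the number of such extra primes in the interval $[n, n + h_k]$ is $O(R/\log R)$ uniformly, by Selberg's sieve (Brun--Titchmarsh), since the interval has length $O(R\log N)$ while each $n + h_i$ being prime already constrains things --- more precisely, one shows that for most $n$ caught by the sieve there are no extra primes, or one simply notes that the contribution of $n$ with extra primes is negligible because an extra prime imposes an additional congruence condition handled by the sieve weight. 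Third, having found $n$ with exactly the primes $n + h_{J(i)}$ and no others in the relevant range, the consecutive prime gaps are $d_{\pi(n+h_{J(i)})} = h_{J(i+1)} - h_{J(i)} = R(\beta_{J(i+1)} - \beta_{J(i)}) + O(1)$, and dividing by $\log(n + h_{J(i)}) \sim \log n$ we want this $\to \beta_{J(i+1)} - \beta_{J(i)}$, which forces the coupling $R \sim \log n$; so one runs the sieve on an interval $[N, 2N]$ and takes $R = \log N$ (rounding to an integer), letting $N \to \infty$.

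The pigeonhole step then finishes it: for each $N$ we obtain a tuple $(d_{\cdot}/\log p_{\cdot}, \ldots) \in \RR^m$ lying within $o(1)$ of some point $(\beta_{J(2)} - \beta_{J(1)}, \ldots, \beta_{J(m+1)} - \beta_{J(m)}) \in S_m(\bb)$, where the index set $J = J_N$ depends on $N$. Since $S_m(\bb)$ is a \emph{finite} set (at most $\binom{k}{m+1}$ points), by pigeonhole one value $J$ recurs for infinitely many $N$, and along that subsequence the normalized gap vectors converge to the corresponding point of $S_m(\bb)$, which is therefore a limit point: $S_m(\bb) \cap \LP_m \ne \emptyset$, proving \eqref{eq:1.5}.

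\textbf{Main obstacle.} The genuinely delicate step is controlling the \emph{extra primes} --- ensuring that the $m+1$ primes delivered by the Maynard--Tao sieve are actually consecutive, i.e. that the interval between them contains no other prime. The Maynard--Tao theorem by itself only guarantees $m+1$ primes among the $n + h_i$; it says nothing about $n + \ell$ for $\ell$ outside $\mathcal H$. Resolving this requires incorporating a Selberg-sieve upper bound into the argument: one weights $n$ by the usual $\Lambda(n)^2$-type sieve weight supported on $n$ for which $\prod (n + h_i)$ has no small prime factors, and then shows that the number of pairs $(n, \ell)$ with $n$ in the support, $\ell \in [0, h_k] \setminus \mathcal H$, and $n + \ell$ prime is smaller (by a factor $\asymp 1/\log R$, say, or by choosing constants, strictly smaller than the main term counting $n$ with $m+1$ primes among the $n+h_i$) than the lower-bound count of good $n$. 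This is precisely the Erd{\H o}s-style "density of extra primes is small" input, now married to the Maynard sieve; getting the numerology to close --- this is exactly where the specific constant $k = 8m^2 + 8m$ comes from, balancing the sieve lower bound for $m+1$ primes against the sieve upper bound for the expected number of extra primes in an interval of the chosen length --- is the technical heart of the proof.
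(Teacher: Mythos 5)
Your outline has the right overall shape (a tuple with entries near $\beta_i\log N$, Maynard--Tao, then pigeonhole over the finite set $S_m(\bb)$), and that skeleton does match the paper, but two of your key steps fail as stated. First, the sieve threshold: with only $k=8m^2+8m$ shifts, one per $\beta_i$, the Maynard--Tao method cannot detect $m+1$ primes. The criterion is $M_k>4m$ at level of distribution $1/2$, and $M_k\asymp\log k$, so $k$ must be exponential in $m$; your assertion that a ``polynomial-in-$m$ bound coming from the optimized smooth weights'' suffices is not a known result. The paper instead takes the tuple size $k$ to be a sufficiently large multiple of $(8m+1)(8m^2+8m)$ and places $k/(8m^2+8m)$ elements near each value $\beta_j\log N$, i.e.\ a partition into $8m^2+8m$ blocks: the number $8m^2+8m$ counts blocks, not shifts, and it arises from a refined version of the sieve argument (Theorem \ref{thm:4.3}(ii), using the extra upper-bound estimate of Lemma \ref{lem:4.5}(iii) for two primes landing in the \emph{same} block) which forces the $m+1$ primes into distinct blocks, so that consecutive gaps realize differences $\beta_{J(i+1)}-\beta_{J(i)}$. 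Your attribution of $8m^2+8m$ to a balance against extra primes outside the tuple is not where that constant comes from.

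Second, and more seriously, your treatment of the extra primes cannot close. The window $[n,n+h_k]$ has length $\asymp\beta_k\log N$, so after the usual $W$-trick there remain $\asymp(\phi(W)/W)\,\beta_k\log N$ positions $\ell\notin\mathcal H$ with $(b+\ell,W)=1$, and an upper-bound sieve gives each a weighted prime density $\asymp(W/\phi(W))/\log N$; the penalization term is therefore of size $\asymp\beta_k$ relative to the main term. Since the $\beta_i$ are arbitrary while $k$ is fixed, no choice of constants lets the lower bound for primes in $\mathcal H$ (which is bounded in terms of $m$) beat this unbounded loss, so the ``Erd\H os-style density of extra primes married to the Maynard sieve'' step fails uniformly in $\bb$. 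The paper avoids this entirely by an Erd\H os--Rankin construction (Lemma \ref{lem:5.2}): residues $a_p$ are chosen for \emph{all} primes $p\le\epsilon\log N$ so that the progression $b\bmod W$ sieves out every integer of $(0,z]$ except the tuple itself; then for $n\equiv b\pmod W$ every prime of $(n,n+z]$ lies in $\mathcal{A}(n)$, and the primes produced are consecutive by construction rather than by an averaging argument. The price is that $W$ is a small power of $N$, which is why the paper must prove a uniform Maynard--Tao theorem (Theorem \ref{thm:4.3}) resting on a modified Bombieri--Vinogradov theorem for moduli divisible by the smooth number $W$ (Theorem \ref{thm:4.2}), which in turn needs Chang's character-sum bounds, a zero-free region for $L$-functions to smooth moduli, and care with a possible exceptional zero (Lemma \ref{lem:4.1}); your proposal supplies no level-of-distribution input of this kind, and without it the sieve cannot even be run in the required progression.
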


\begin{acknowledgements*}
For  helpful comments,  corrections or discussions we are grateful 
to 
Daniel Goldston, 
Andrew Granville,
Paul Pollack and
Terence Tao.
\end{acknowledgements*}


\section{Notation}
 \label{sec:2}

The set of all primes is  denoted  by $\PP$,  the  $n$th  smallest 
prime by $p_n$,  the  $n$th difference  $p_{n + 1} - p_n$  in  the 
sequence of primes by $d_n$, and $p$ always stands for a prime. 
The indicator function for $\PP$ is denoted $\ind{\PP}$.
The   Euler, von  Mangoldt  and  $k$-fold  divisor  functions  are 
denoted  by  $\phi$,  $\Lambda$  and  $\tau_k$, the prime counting 
functions by $\pi(x) = \sum_{n \le x} \ind{\PP}(n)$, 
$\psi(x) = \sum_{n \le x} \Lambda(n)$,  
 
\[
 \pi(N;q,a) = \sums[n \le N][n \equiv a \bmod q] \ind{\PP}(n), 
  \quad 
 \psi(N;q,a) = \sums[n \le N][n \equiv a \bmod q] \Lambda(n).
\]

A Dirichlet character to the modulus $q$ is denoted $\chi \bmod q$ 
or simply $\chi$, and the $L$-function associated with it is 
denoted $L(s,\chi)$.

The $n$th iterated logarithm is denoted by  $\log_n x$ and defined 
recursively as follows:
$\log_1 x = \max\{1,\log x\}$ and  
$\log_{n+1} x = \log_1(\log_n x)$ for $n \ge 1$.

The  greatest prime factor of an integer $q$ is denoted  $\gp{q}$.

For any  $k$-tuple of integers  $\A  =  \{h_1,\ldots,h_k\}$ and an 
integer $n$, the translated $k$-tuple 
$\{n + h_1,\ldots,n + h_k\}$ is denoted by $\A(n)$.

Throughout,  $c$  denotes a positive  constant that  may differ at 
each occurrence.
Expressions of the  form  $A = \Oh[B]$,  $A \ll B$  and  $B \gg A$  
signify  that  $|A| \le c|B|$.
If $c$ depends on certain parameters this may be indicated by 
subscripts (as in $A \ll_{\epsilon} B$, etc.).
The relation $A \ll B \ll A$ is denoted $A \asymp B$.
Finally,  $\oh[1]$  denotes  a  quantity that  tends to  $0$  as a 
certain parameter (clear in context) tends to infinity.


\section{Outline of the proof}
 \label{sec:3}

For the sake of  exposition we ignore  (only in  this section) the 
possibility of Siegel zeros%
\footnote{%
We are abusing terminology here.
By a Siegel zero we mean a real, simple zero of a Dirichlet 
$L$-function (corresponding to a primitive character), in a region 
that we can show is otherwise zero free.
In  some cases  this is a wider region than the  classical one --- 
see Lemma \ref{lem:4.1} below.
}
--  accounting  for  this  possibility  introduces  certain  minor 
technical complications in parts of the proof.

The idea  underlying  our  proof  of  Theorem  \ref{thm:1.3} is to 
combine a  construction  of  Erd{\H o}s  \cite{ERD1935} and Rankin 
\cite{RAN}  with  the  recent  theorem of  Maynard \cite{MAY} and%
\footnote{%
Tao  (unpublished)  independently  discovered  the  same method as 
Maynard around the same time.
}
Tao.

The  Erd{\H o}s--Rankin  construction   produces  long   intervals 
$(n,n + z]$ containing only composite integers.
This is accomplished by choosing a set of integers 
$\{a_p : p \le y\}$, one for each prime $p \le y < z$, so that for 
every integer $g \in (0,z]$, the congruence $g \equiv a_p \bmod p$ 
holds for at least one $p \le y$.
By the  Chinese remainder  theorem one  can  find  an integer $b$, 
uniquely determined modulo $P(y)  =  \prod_{p \le y} p$, such that 
$b \equiv -a_p \bmod p$ for every $p \le y$.
Now suppose  $n \equiv b \bmod P(y)$ and $n > y$.
For any  $g \in (0,z]$  we  have  $g \equiv a_p \bmod p$  for some 
$p \mid P(y)$,  and so  $g + n \equiv a_p - a_p \equiv 0 \bmod p$; 
hence, $g + n$ is composite for each $g\in(0,z]$.
In this  situation  we  say  that  the  progression $b \bmod P(y)$  
{\em  sieves  out}  intervals  of  the  form  $(n, n + z]$,  where 
$n \equiv b \bmod z$ and $n > y$.
Noting that  $\log P(y)  \sim  y$ by the prime number theorem, the 
goal is to maximize the ratio $z/y$.

The  Maynard--Tao  theorem  establishes,  for  the first time, the 
existence of  $(m+1)$-tuples of  primes in  $k$-tuples of integers 
the form 
\[
 \A(n) = \{n + h_1,\ldots,n + h_k\},
\] 
whenever 
$\A = \{h_1,\ldots,h_k\}$ is an  {\em  admissible}  $k$-tuple (see 
\eqref{eq:4.1})  and  $k$  is  large  enough in terms of  $m$, say 
$k \ge k_m$.
The  prime  $k$-tuple   conjecture   asserts  that  one  can  take 
$k_m = m + 1$,  but  since  in   the  Maynard--Tao  theorem  $k_m$  
is exponential in  $m$  (and this seems to be the limit of the 
method of proof at present), no given admissible $(m + 1)$-tuple 
$\A = \{h_1,\ldots,h_{m+1}\}$ is known to give  
$|\A(n) \cap \PP| = m + 1$ for infinitely many $n$.

It turns out that in the Maynard--Tao theorem one can restrict $n$
to lie in an arithmetic  progression --- in fact this is a feature 
of its proof.
Given   a   sufficiently   large   number   $N$   and   a  modulus 
$W = \prod_{p \le w} p$, where $w$  grows slowly with $N$, one can  
take  $n  \in  (N,2N]$  with  $n \equiv b \bmod W$,  provided that  
$b$ is an integer for which $(b + h_i,W) = 1$  for each $i$.
Choosing  the  progression  $b \bmod W$  carefully, one can use it 
to sieve  out all  integers in  intervals of the form  $(n,n + z]$ 
with  $n  \equiv  b  \bmod W$  {\em  except}  for the  integers in 
$\A(n)$.
Used in this way, the Maynard--Tao theorem produces 
{\em consecutive} $m$-tuples of primes in intervals of 
{\em bounded length}.

In  the  present  paper, we  modify  the  above  ideas  to  obtain 
consecutive  primes  in  $\A(n)  =  \{n + h_1, \ldots, n + h_k\}$, 
$n \in (N,2N]$, with differences $h_j - h_i \asymp \log N$.
To do this, we give a uniform version of the  Maynard--Tao theorem 
in  which the elements of the  $k$-tuple $\A = \{h_1,\ldots,h_k\}$  
are allowed to grow with $N$, and in  which $w$ can be as large as  
$\epsilon\log N$ for a sufficiently small $\epsilon$.
This means that the modulus  $W$  is as large as a small  power of 
$N$,  and  for  reasons  concerning  level  of  distribution  (see 
\eqref{eq:4.2}  {\em et seq.}), this extension of the Maynard--Tao 
theorem requires a modification of the Bombieri--Vinogradov 
theorem%
\footnote{%
Putative  Siegel  zeros  have an impact here, and any  exceptional 
moduli must be taken into account.
}
that exploits the fact that the arithmetic progressions with which 
we are concerned have moduli  that are all multiples of the smooth 
integer $W$.

To obtain stronger quantitative results, we use a further 
modification of the Maynard--Tao theorem, which might be of 
independent interest. 
We show that given a partition 
$\A = \A_1 \cup \cdots \cup \A_{8m + 1}$ 
of $\A$ into $8m + 1$ equal sized subsets, there are infinitely 
many $n$ such that $|\A_j(n) \cap \PP| \ge 1$ for at least $m + 1$ 
different values of $j \in \{1,\ldots,8m + 1\}$, provided that the 
size of $\A$ is sufficiently large.

We use a slight modification of the Erd{\H o}s--Rankin 
construction to find an  arithmetic  progression  $b \bmod W$ that 
sieves out the integers in an interval  $(0,z]$,  {\em except} for 
precisely $k$ integers $\A  = \{h_1,\ldots,h_k\}  \subseteq (0,z]$  
that constitute our admissible $k$-tuple.
We want to choose $\A$ so that 
$h_j - h_i \sim (\beta_j - \beta_i)\log N$ for 
$1 \le i < j \le k$,  where $\beta_k \ge \cdots \ge \beta_1 \ge 0$ 
are given.

As in the Erd{\H o}s--Rankin  construction, we select the integers 
$\{a_p : p \le y\}$,  $y \le z$,  in  stages  according  to  their 
size.%
\footnote{%
The effect of any  Siegel zero  here would mean that here we  must 
actually select integers $\{a_p : p \le y, \, p \not\in \Z\}$  for 
a certain sparse set of primes $\Z$.
}
We take  $0 < y_1 < y_2 < y < z$, say, where  $y_1$ and  $y_2$ are 
parameters to be chosen optimally later.
First, we put $a_p = 0$ for primes $p \in (y_1,y_2]$.
Next, we use a ``greedy sieve'' to choose the  $a_p$ optimally for
the small primes  $2 < p \le y_1$, that is, we successively choose 
$a_p$ so that  $g  \equiv  a_p \bmod  p$  for the maximum possible 
number of $g \in (0,z]$ that have remain ``unsifted'' thus far.
Since we do not know the congruence classes  $a_p \bmod p$ for the
smallest primes,  our  approach  does  not work in general for all
$k$-tuples $\A = \{h_1,\ldots,h_k\}$; we find it convenient to 
select our  $k$-tuple  only after sieving by primes $p \le y_2$.
We choose the numbers $h_i$ from among the primes in $(y,z]$.
(This is  why we do  not use  $p = 2$  ``greedily''  --  if we had 
$a_2 = 1$ then only even integers would remain unsifted.)
It  is  clear  that  each  $h_i \not\equiv  a_p \bmod p$  for  all
$p \in (y_1,y_2]$ since for those primes we have $a_p = 0$.
We can also guarantee that  $h_i \not\equiv a_p \bmod p$  for  the 
small primes $p \le y_1$  if we select primes $h_i$  in a suitable 
arithmetic progression $b \bmod P_1$, where 
$P_1 = \prod_{2 < p \le y_1} p$.
We choose $y_1 = (\log y)^{1/4}$, so such  primes exist by (Page's 
version of) the prime number theorem for arithmetic progressions.%
\footnote{%
Again, this is assuming Siegel zeros do not exist.
If  they do, we  need  only  discard  at  most  one prime from the 
product  defining  $P_1$  to  ensure it  isn't  a  multiple  of an  
``exceptional'' modulus.
}%
\footnote{%
The reader will note that $y_1 = (\log y)^{1/4}$  is  smaller than 
the optimal choice for  $y_1$  in  the original Erd{\H o}s--Rankin 
construction. 
By a more  careful  argument one may be able to take $y_1$ larger,
but this is not necessary for our  application, and  so we satisfy 
ourselves with a smaller choice of $y_1$.
}


\section{A uniform Maynard--Tao theorem}
 \label{sec:4}

\subsection{Preliminaries}
 \label{subsec:4.1}
A precise  statement of the  version of  Maynard--Tao that we will 
use requires some notation, terminology and setting up.

We say that a given $k$-tuple of integers 
$\A = \{h_1,\ldots,h_k\}$ is {\em admissible} if
\begin{align}
 \label{eq:4.1}
  \textstyle
   \big|
    \big\{
          n \bmod p : \prod_{i=1}^k(n + h_i) \equiv 0 \bmod p 
    \big\} 
   \big| 
    < p
     \qquad(p \in \PP).
\end{align}
The  {\em  prime  $k$-tuple  conjecture} asserts that if  $\A$  is 
admissible then there are infinitely many integers  $n$  for which 
$|\A(n) \cap \PP| = k$.

{\em Level  of  distribution}  concerns  how evenly the primes are 
distributed among arithmetic progressions.
We say that the primes have level of distribution  $\theta$  if 
for any given  $\epsilon \in (0, \theta)$ and  $A > 0$ one has, 
for all $N > 2$, the bound
\begin{align}
 \label{eq:4.2}
  \sum_{q \le N^{\theta - \epsilon}}
   \max_{(q,a) = 1}
    \bigg|
          \psi(N;q,a) - \frac{\psi(N)}{\phi(q)}
    \bigg|
  \ll_{\epsilon,A} \frac{N}{(\log N)^A}.
\end{align}
The celebrated Bombieri--Vinogradov theorem 
\cite[Th\'eor\`eme 17]{BOM} implies  that the primes have level of 
distribution $\theta = \frac{1}{2}$, and the 
{\em Elliott--Halberstam conjecture} \cite{EH,FG} asserts that the 
primes have level of distribution $\theta = 1$.

Next, fix an integer $k \ge 2$ and a number $\eta  \in [0,1)$, and 
for  any  fixed  compactly  supported  square-integrable  function 
$F : [0,\infty+) \to \RR$, define the functionals
\[
 I_k(F) 
  = \int_{[0,\infty+)^k} 
     F(t_1,\ldots,t_k)^2 \, \dd{t_1}\ldots \dd{t_k}
\]
and (for $i = 1,\ldots,k$), 
\[
 J_{i,1 - \eta}(F) 
  = \int_{(1 - \eta)\cdot \mathcal{R}_{k-1}}
     \!\!\!
      \bigg(
       \int_{0}^{\infty} 
        F(t_1,\ldots,t_k) \, \dd{t_i}
      \bigg)^{2} 
       \dd{t_1}\ldots \dd{t_{i-1}} \dd{t_{i+1}} \ldots \dd{t_k},
\]
$(1 - \eta)\cdot\mathcal{R}_{k-1}$ being the simplex
\[
 (1 - \eta)\cdot\mathcal{R}_{k-1} 
  = 
   \{ (t_1,\ldots,t_k) \in [0,1]^{k-1} : 
       t_1 + \cdots + t_k \le 1 - \eta \}.
\]
Define $M_{k,\eta}$ to be the supremum 
\begin{align}
 \label{eq:4.3}
 M_{k,\eta} 
  = 
    \sup
     \frac{\sum_{i=1}^k J_{i,1 - \eta}(F)}{I_k(F)} 
\end{align}
over  square-integrable  functions  $F$  that are supported on the 
simplex
\[
 (1 + \eta)\cdot\mathcal{R}_k 
  = 
   \{ (t_1,\ldots,t_k) \in [0,1]^k : 
       t_1 + \cdots + t_k \le 1 + \eta \},
\]
and are not identically zero.
Maynard  \cite[Proposition 4.2]{MAY}  has shown that for any given 
$m$ there are infinitely many $n$ with 
$|\A(n) \cap \PP| \ge m + 1$, provided 
$M_k  =  M_{k,0}  >  2\theta^{-1}m$, where $\theta$ is 
an admissible level of distribution for $\PP$.
By \cite[Proposition 4.3]{MAY} one has  $M_5 > 2$,  $M_{105} > 4$, 
and  that  $M_k > \log k - \log\log k - 2$  for  all  sufficiently 
large $k$.
A recent  Polymath  project  \cite[Theorem 3.9]{POLY}  has refined 
these bounds as follows:
\begin{align}
 \label{eq:4.4}
 M_{54} > 4,            \quad 
 M_k \ge \log{k} - c,
\end{align}
for some absolute constant $c$. 
Moreover,  the   Polymath   project  has  refined  the  method  of 
\cite{MAY}   slightly,   allowing  one  to  reduce  the  condition 
$M_k > 2\theta^{-1}m$  
to  
$M_{k,\eta} > 2\theta^{-1}m$ 
for some 
$0 \le \eta \le \theta^{-1} - 1$.
They  have  also  \cite[Theorem 3.13]{POLY}   produced  the  bound
\begin{align}
 \label{eq:4.5}
  M_{50,1/25} > 4.
\end{align}

Therefore, if  $\A(x) = \{x + h_i\}_{i=1}^{k}$  is any  admissible 
$k$-tuple, then for infinitely many $n$ we have 
$|\A(n) \cap \PP| \ge m + 1$, provided $k \ge 50$ in the case 
$m = 1$, and $k \ge \e^{4m + c}$ in general.
On  the   Elliott--Halberstam   conjecture  this  holds   provided
$k  \ge  5$  in the case  $m  =  1$,  and  $k \ge \e^{2m + c}$  in 
general. 

The key to  extending Maynard--Tao in the way we require  involves 
an extension of \eqref{eq:4.2}  in  which the moduli  $q$  are all 
multiples of an  integer  $q_0$, which may be as  large as a small 
power of $N$, but all of whose prime factors are relatively small.
This   extension  of  Bombieri--Vinogradov  in   turn  requires  a 
zero free region for the  corresponding  Dirichlet  $L$-functions, 
given by the following lemma.

\begin{lemma}
 \label{lem:4.1}
Let $T \ge 3$ and let $P \ge T^{1/\log_2 T}$.
Among  all  primitive  characters  $\chi \bmod q$  to  moduli  $q$ 
satisfying $q \le T$ and $\gp{q} \le P$,  there is at most one for 
which $L(s,\chi)$ has a zero in the region 
\begin{align}
 \label{eq:4.6} 
  \Re(s) > 1 - \frac{c}{\log P}, \quad 
  |\Im(s)| \le \exp\br{\log P/\sqrt{\log T}}.
\end{align}
where $c$  is a  \textup{(}sufficiently  small\textup{)}  positive 
absolute constant.
If such a character  $\chi \bmod q$  exists, then  $\chi$ is real, 
$L(s,\chi)$  has just one zero in the region \eqref{eq:4.6}, which 
is real and simple, and 
\begin{align}
 \label{eq:4.7}
  \gp{q} \gg \log q \gg \log_2 T.
\end{align}
\end{lemma}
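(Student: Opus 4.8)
The plan is to combine the classical de la Vallée Poussin zero-free region argument with Landau's ``at most one exceptional modulus'' argument, modified so that the hypothesis $\gp{q}\le P$ keeps the relevant conductor length at size $\log P$ rather than $\log q$, while the restriction $|\Im(s)|\le H:=\exp(\log P/\sqrt{\log T})$ forces every archimedean contribution, of size $O\big(\log(|\Im(s)|+2)\big)=O(\log P/\sqrt{\log T})$, to be of smaller order than $\log P$. The classical ingredients I would record first are: for $1\le\Re(s)\le 2$ and $\chi$ primitive modulo $q$, the bound $-\Re\frac{L'}{L}(s,\chi)\le A\log\big(q(|\Im(s)|+2)\big)-\Re\frac{1}{s-\rho}$ valid for every zero $\rho$ of $L(s,\chi)$ with $|\Im(\rho)-\Im(s)|\le 1$ (the point being that the $\log q$ here comes from the $\frac12\log(q/\pi)$ in the functional equation, the sum over the remaining zeros being nonnegative and hence simply discarded); the bound $-\frac{\zeta'}{\zeta}(\sigma)\le(\sigma-1)^{-1}+A$; the nonnegativity of $\sum_n\Lambda(n)(1+\chi_1(n))(1+\chi_2(n))n^{-\sigma}$ for real $\chi_1,\chi_2$ and of $\sum_n\Lambda(n)\big(1+\chi(n)\cos(\gamma\log n)\big)^2n^{-\sigma}$, and $3+4\cos\theta+\cos2\theta\ge0$; and the classical \emph{effective} lower bound $1-\beta_0\gg(\sqrt q\,\log^2q)^{-1}$ for an exceptional real zero $\beta_0$ of a real character modulo $q$, coming from $h\ge1$ in the analytic class number formula.

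The one genuinely non-classical input, which I expect to be the main obstacle, is a zero-free region for \emph{large, smooth} moduli: there is an absolute constant $B$ such that if $\chi$ is primitive modulo $q$ with $q\le T$ and $\gp{q}\le P$, and $L(s,\chi)$ has a zero in the region \eqref{eq:4.6}, then necessarily $q\le P^{B}$ and $\gp{q}\gg\log q$. For $q\le P^{B}$ nothing new is needed: the classical region $\Re(s)>1-c_1/\log\big(q(|\Im(s)|+2)\big)$, which at heights $|\Im(s)|\le H$ is zero-free apart from at most one real zero of one real character, already contains \eqref{eq:4.6} once $c$ is chosen small in terms of $B$. The substance is thus that a large smooth modulus carries no zero so close to $1$; since $L(s,\chi)$ has conductor exactly $q$, the argument must use the factorisation $\chi=\prod_i\chi_i$ into primitive characters modulo the prime powers $p_i^{a_i}\,\|\,q$ (each $p_i\le P$). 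The route I would attempt is a descent converting a zero of $L(s,\chi)$ in \eqref{eq:4.6} into a zero, in a region of comparable width, of an $L$-function built from only a bounded number of the $\chi_i$ --- hence of conductor $P^{O(1)}$, where the classical bound applies --- the same mechanism detecting when $q$ is too smooth and yielding $\gp{q}\gg\log q$. The delicate case here is that of a high prime power $q=p^{a}$, for which no descent within $q$ is available; I would expect this to need either a special zero-density estimate for $L$-functions of prime-power modulus or input from the theory of $p$-adic $L$-functions, and it is the step about which I am least confident. (An alternative for the whole input would be a log-free zero-density estimate for the family of $L$-functions of $P$-smooth moduli $\le T$ with an exponent governed by a power of $P$ rather than of $T$.)

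Granting this input, the remaining assertions are routine. \emph{At most one exceptional character:} if distinct primitive $\chi_1\bmod q_1$ and $\chi_2\bmod q_2$ (both $P$-smooth, $\le T$) had zeros in \eqref{eq:4.6}, then both $q_1,q_2\le P^{B}$ by the input, and Landau's argument applies --- with $\sigma=1+\delta$, $\delta\asymp c/\log P$, using the nonnegativity of $-\frac{L'}{L}$ of the appropriate product of $\zeta$ and the $L$-functions of $\chi_1,\chi_2,\bar\chi_1,\bar\chi_2$ and their products (all of modulus $P^{O(1)}$) together with the classical bounds above --- to give a contradiction once $c$ is small. \emph{Reality:} a complex exceptional $\chi$ would force $\bar\chi\ne\chi$ to be a second one (with the zero $\bar\rho_0$, of the same real part and $|\Im(\bar\rho_0)|\le H$), contrary to uniqueness. \emph{One simple zero:} if $L(s,\chi)$, $\chi$ real, had two zeros in \eqref{eq:4.6} counted with multiplicity, one applies the nonnegativity of $-\frac{L'}{L}$ of $\zeta(s)L(s,\chi)^2L(s,\chi^2)$ (two real zeros, or a double real zero) or of the product attached to $\sum_n\Lambda(n)(1+\chi(n)\cos(\gamma\log n))^2n^{-\sigma}$ (a conjugate pair $\beta\pm i\gamma$), again at $\sigma=1+\delta$ with $\delta\asymp c/\log P$ and using that $\chi^2$ has $P$-smooth modulus of size $P^{O(1)}$, to reach a contradiction; so there is exactly one zero in \eqref{eq:4.6}, and it is real and simple.

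Finally, for \eqref{eq:4.7}: the exceptional real zero lies in \eqref{eq:4.6}, so $1-\beta_0<c/\log P$; combined with $1-\beta_0\gg(\sqrt q\,\log^2q)^{-1}$ this gives $\sqrt q\,\log^2q\gg\log P$, and since $P\ge T^{1/\log_2 T}$ yields $\log P\ge(\log T)/\log_2 T$, a short bootstrap gives $\log q\gg\log_2 T$; with $\gp{q}\gg\log q$ from the non-classical input this is $\gp{q}\gg\log q\gg\log_2 T$. In summary, the assembly in the last two paragraphs is standard, and the crux --- the step I would expect to take real work --- is the smooth-modulus zero-free region of the second paragraph, especially the descent to conductor $P^{O(1)}$ and the treatment of high prime-power moduli.
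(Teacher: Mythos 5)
There is a genuine gap, and it sits exactly where you flag your own uncertainty: the zero-free region of width $\asymp 1/\log P$ for \emph{all} primitive characters of $P$-smooth modulus $q\le T$, not merely those with $q\le P^{B}$. Your proposed route --- factoring $\chi=\prod_i\chi_i$ over the prime powers dividing $q$ and ``descending'' a zero of $L(s,\chi)$ to a zero of an $L$-function built from a bounded number of the $\chi_i$ --- is not a workable mechanism: $L(s,\chi)$ does not factor through the $\chi_i$, and there is no known relation between its zeros and those of $L$-functions of sub-products of the components, so no descent of this kind is available even in the squarefree case (and you yourself note that the prime-power case defeats it entirely). The input the paper actually uses is different in kind: Chang's bound for short character sums to smooth moduli (roughly $\sum_{n\le u}\chi(n)\ll u\,\e^{-\sqrt{\log u}}$ once $\log u$ exceeds a constant multiple of $\log\gp{q}+\log q/\log_2 q$), fed through partial summation in $L(s,\chi)=s\int_1^\infty u^{-s-1}\big(\sum_{n\le u}\chi(n)\big)\dd{u}$ to give $L(\sigma+it,\chi)\ll(|t|+1)P^{\eta}\log P$ for $\sigma>1-\eta/(2\kappa)$, and then the standard Landau--Iwaniec lemmas converting an upper bound $M$ on $|L|$ near the $1$-line into a zero-free region of width $\asymp 1/\log M$. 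Since $\log M\asymp\eta\log P+\log(|t|+1)$ rather than $\log q$, this yields the region \eqref{eq:4.6} directly for every smooth modulus up to $T$ (complex case), with at most one real simple zero in the real case, and the same $L$-bounds make the two-real-characters comparison run with $\log P$ in place of $\log q$; note in particular that the proof never establishes, nor needs, your intermediate claim that an exceptional modulus must satisfy $q\le P^{B}$.

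Two smaller points. First, $\gp{q}\gg\log q$ does not require any deep input: a real primitive character has modulus squarefree up to a factor of at most $4$, so $\log q\ll\sum_{p\le\gp{q}}\log p\ll\gp{q}$ by Chebyshev; you route this through your unproven descent, which is both unnecessary and unavailable. Second, your deduction of $\log q\gg\log_2 T$ from $1-\beta_0\gg(\sqrt{q}\,(\log q)^2)^{-1}$ together with $1-\beta_0<c/\log P$ and $\log P\ge\log T/\log_2 T$ is exactly the paper's argument and is fine, as are your closing Landau-type uniqueness, reality and simplicity arguments --- but they only become applicable once the smooth-modulus $L$-bound (or an equivalent substitute) is in hand, which your proposal does not supply.
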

\begin{proof}[Proof outline]
Lemma \ref{lem:4.1} follows from Chang's bound \cite{CHA} for 
character sums to smooth moduli; the argument is somewhat standard 
and so we only give an outline of the proof.

If $\chi \bmod q$ is real and primitive then  $q$ is squarefree up  
to a factor of at most $4$, so 
$\log q \ll \sum_{p \le \gp{q}} \log p \ll \gp{q}$  by Chebyshev's 
bound.
If $\beta$ is any real zero of $L(s,\chi)$ then  
$1 - \beta \gg 1/(\sqrt{q}(\log q)^2)$ \cite[\S14, (12)]{DAV}.
Hence \eqref{eq:4.7}.

If $\chi \bmod q$ is primitive and 
$\kappa\br{\log \gp{q} + \log q/\log_2 q} < \log u < \log q$, 
$\kappa$ a sufficiently  large absolute constant, then a result of 
Chang \cite[Theorem 5]{CHA} yields  
$\sum_{n \le u} \chi(n) \ll u\e^{-\sqrt{\log u}}$.
If  $\gp{q} \le P$,  where  $P \ge q^{1/\log_2 q}$,  we can deduce 
that   $L(\sigma + it, \chi)   \ll   (|t| + 1)P^{\eta}\log P$  for  
$\sigma > 1 - \eta/(2\kappa)$, where 
$0 < \eta \le 1/(2\sqrt{\log q})$.
We do this by writing 
$
 L(s,\chi) 
   = s\int_1^{\infty} u^{-s - 1}( \sum_{n \le u} \chi(n) ) \dd{u},
$
using   Chang's  bound   for  $u$  in  an  applicable  range,  the 
Polya--Vinogradov  bound for larger  $u$  and a  trivial bound for 
smaller $u$.

Under the additional assumption $\eta  \gg  (\log_2 P)/\log P$, we 
can then show by standard calculations (see 
\cite[Lemmas 10--12]{IWA} for instance) that $L(\sigma + it,\chi)$ 
has no zeros for $\sigma  >  1 -  c_1\eta/\log((|t| + 1)P^{\eta})$  
if  $\chi$  is   complex,  and  at  most  one  zero  this  region, 
necessarily real and simple, if $\chi$ is real.
Moreover, we  can  show  that  for  any  distinct  real  primitive 
characters  $\chi_1 \bmod q_1$  and  $\chi_2 \bmod q_2$  (possibly 
$q_1 = q_2$), if $\gp{q} \le P$, where  $P \ge q^{1/\log_2 q}$ and 
$q = [q_1,q_2]$, and if $\beta_1$ and  $\beta_2$ are real zeros of 
$L(s,\chi_1)$ and $L(s,\chi_2)$, then 
$\min(\beta_1,\beta_2) \le 1 - c_2/\log P$.
(Here, $c_1$ and $c_2$ are a constants that are sufficiently small  
in terms of $\kappa$.)
\end{proof}

We fix an absolute  constant  $c$  as  in  Lemma \ref{lem:4.1} and 
define
\begin{align}
 \label{eq:4.8}
  \Z[T] = P^+(q)
\end{align}
if such an exceptional modulus $q$ exists, and $\Z[T]=1$ if no 
such modulus exists.
For future reference, note that the bound \eqref{eq:4.7} implies 
that, regardless of whether or not such a modulus exists, we have
\begin{align}
 \label{eq:4.9}
  \frac{\Z[T]}{\phi(\Z[T])} = 1 + \Oh[\frac{1}{\log_2 T}].
\end{align}

\begin{theorem}[Modified Bombieri--Vinogradov theorem]
 \label{thm:4.2}
Let $N > 2$.
Fix  any $C > 0$ and  $\theta  =  1/2  - \delta  \in  (0,1/2)$.
Fix  $\epsilon  >  0$  and  suppose $q_0$ is a squarefree  integer 
satisfying $q_0 < N^{\epsilon}$ and 
$\gp{q_0} < N^{\epsilon/\log_2 N}$.
If  $\epsilon  =  \epsilon(C,\delta,c)$ is  sufficiently  small in 
terms of $C$, $\delta$ and the constant $c$ in Lemma 
\eqref{lem:4.1}, then with $\Z[N^{2\epsilon}]$ as in 
\eqref{eq:4.8} we have
\begin{align}
 \label{eq:4.10}
  \sums[q \, < \, N^{\theta}]
       [q_0 \, \mid \,\, q \hspace{8pt}]
       [(q, \, {\Z[N^{2\epsilon}]}) \, = \, 1]
   \max_{(q,a) = 1}
    \left| \psi(N;q,a) - \frac{\psi(N)}{\phi(q)} \right|
     \ll_{\delta,C}
      \frac{N}{\phi(q_0)(\log N)^C}.
\end{align}
\end{theorem}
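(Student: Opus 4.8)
\noindent\emph{Proof strategy.}
The plan is to run the standard proof of the Bombieri--Vinogradov theorem \cite{BOM} --- orthogonality of Dirichlet characters, passage to primitive characters, Vaughan's identity for $\Lambda$, and the multiplicative large sieve inequality --- while carrying the two extra restrictions $q_0\mid q$ and $(q,\Z[N^{2\epsilon}])=1$ through the argument. Relative to the textbook proof (see \cite{DAV}, or the large-sieve version of it), only two points call for genuinely new input: one must extract the factor $\phi(q_0)^{-1}$ appearing on the right of \eqref{eq:4.10}, and one must handle a possible Siegel zero \emph{effectively} --- an appeal to Siegel's theorem is not permitted, since the effective bound \eqref{eq:1.4} of Corollary \ref{cor:1.2} rests on the present theorem --- which is exactly what Lemma \ref{lem:4.1} together with the excision $(q,\Z[N^{2\epsilon}])=1$ are built for.

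First I would perform the usual reductions. Writing $\psi(N,\chi)=\sum_{n\le N}\chi(n)\Lambda(n)$, orthogonality gives $\psi(N;q,a)-\psi(N)/\phi(q)=\phi(q)^{-1}\sum_{\chi\ne\chi_0}\bar\chi(a)\psi(N,\chi)+\Oh[\phi(q)^{-1}(\log N)^2]$, and replacing each $\chi\bmod q$ by the primitive character $\chi^*\bmod q^*$ inducing it costs a further $\Oh[\phi(q)^{-1}(\log N)\log q]$; summed against $\sum_{q<N^\theta,\,q_0\mid q}\phi(q)^{-1}\ll\phi(q_0)^{-1}\log N$, these are $\ll\phi(q_0)^{-1}(\log N)^{O(1)}$ and so admissible. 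For the main term, the weight attached to a fixed primitive $\chi^*\bmod q^*$ is $\sum_{t<N^\theta/[q_0,q^*]}\phi([q_0,q^*]t)^{-1}$, which by $\phi(ab)\ge\phi(a)\phi(b)$, the identity $\phi([q_0,q^*])=\phi(q_0)\phi(q^*)\phi((q_0,q^*))^{-1}$ (valid as $q_0$ is squarefree), and $\sum_{t\le x}\phi(t)^{-1}\ll\log x$ is $\ll\phi(q_0)^{-1}\phi((q_0,q^*))\phi(q^*)^{-1}\log N$. Thus $\phi(q_0)^{-1}$ factors out as an honest prefactor --- this is the entire mechanism by which it appears --- and it remains to bound $\sum_{q^*<N^\theta}\phi((q_0,q^*))\phi(q^*)^{-1}\sum_{\chi^*}|\psi(N,\chi^*)|$, the inner sum over primitive characters mod $q^*$, by $\ll_{\delta,C}N(\log N)^{-C-2}$. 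I would split this at $q^*=(\log N)^{D}$ for a suitable $D=D(C)$.

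For the small conductors $q^*\le(\log N)^{D}$ there are $\le(\log N)^D$ moduli, each carrying $\le(\log N)^D$ primitive characters, and the weight times the number of characters of modulus $q^*$ is at most $(q_0,q^*)\le(\log N)^{D}$; so it suffices to bound $|\psi(N,\chi^*)|$ for each such $\chi^*$ by $\ll N(\log N)^{-C'}$ with $C'=C'(C,D)$, and here Lemma \ref{lem:4.1} supplies an \emph{effective} Siegel--Walfisz estimate. Applying it with $T=N^{2\epsilon}$ and with its parameter taken to be $P=N^{c'\epsilon/\log_2 N}$, where $c'$ is chosen so that $P\ge T^{1/\log_2 T}$ and $\gp{q_0}<P$, we have $q^*\le(\log N)^{D}<T$ and $\gp{q^*}\le q^*<P$ once $N$ is large, so $L(s,\chi^*)$ has no zero in the region \eqref{eq:4.6} unless $\chi^*$ is the unique exceptional character there; the truncated explicit formula then gives $\psi(N,\chi^*)\ll N(\log N)^{-c''/\epsilon}+N\exp\bigl(-c'''\sqrt{\log N}/\log_2 N\bigr)$, which is $\ll N(\log N)^{-C'}$ provided $\epsilon=\epsilon(C,\delta,c)$ is taken small enough. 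The exceptional character itself is harmless, since by definition \eqref{eq:4.8} the prime $\Z[N^{2\epsilon}]$ divides the associated modulus, so the constraint $(q,\Z[N^{2\epsilon}])=1$ prevents any admissible $q$ from being a multiple of that modulus; hence the exceptional character induces none of the characters occurring in the sum and simply drops out. For the large conductors $q^*\in((\log N)^{D},N^\theta)$ I would apply Vaughan's identity and bound the resulting type I and type II bilinear sums by the multiplicative large sieve inequality in the variant where the modulus is restricted to multiples of the $q_0$-part of $q^*$ (writing $q^*=ab$ with $(b,q_0)=1$ and $a$ composed of primes dividing $q_0$); because $\theta=\tfrac12-\delta<\tfrac12$ this yields a power saving $\ll_\delta N^{1-\delta_0}$, $\delta_0=\delta_0(\delta)>0$, for each fixed $a$, and summing the associated totient weights over the possible $a$ costs only $N^{o(1)}$ by the hypotheses $q_0<N^{\epsilon}$ and $\gp{q_0}<N^{\epsilon/\log_2 N}$ --- which a power of $N$ absorbs.

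I expect the main obstacle to be the portion of the previous paragraph dealing with the exceptional modulus: arranging that its treatment is at once effective --- so that \eqref{eq:1.4} survives --- and compatible with the restriction $q_0\mid q$. Concretely, the parameter $P$ (at $T=N^{2\epsilon}$) must be small enough that $P\ge T^{1/\log_2 T}$ and that every prime factor of $q_0$ lies below $P$, yet large enough that every small-conductor character that could conceivably be exceptional is detected inside the region \eqref{eq:4.6}, so that it is the one character counted in \eqref{eq:4.8}; one then checks that deleting the single prime $\Z[N^{2\epsilon}]$ really does remove that character from the sum. The hypotheses $\gp{q_0}<N^{\epsilon/\log_2 N}$ and the smallness of $\epsilon=\epsilon(C,\delta,c)$ are precisely what make this balancing act work. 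The remaining ingredients --- the large sieve in its ``multiples of the smooth factor'' form (by now standard in work on the level of distribution in progressions with smooth modulus), the imprimitivity and prime-power errors, and the bookkeeping of Euler-totient factors --- are routine.
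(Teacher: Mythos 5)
Your reductions at the start are fine and agree with the paper's: passing to primitive characters, and extracting $\phi(q_0)^{-1}$ from the weight $\sum_{t<N^{\theta}/[q_0,q^*]}\phi([q_0,q^*]t)^{-1}\ll \phi((q_0,q^*))\phi(q_0)^{-1}\phi(q^*)^{-1}\log N$, is exactly the mechanism used there (the paper writes $q'=ab$, $a\mid q_0$, $(b,q_0)=1$, and is left with the weight $1/\phi(b)$). Likewise your handling of the exceptional character — it is the one character of Lemma \ref{lem:4.1}, its modulus is divisible by the prime $Z_{N^{2\epsilon}}$ of \eqref{eq:4.8}, and the constraint $(q,Z_{N^{2\epsilon}})=1$ removes it — matches the paper. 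The genuine gap is in your case split and in the claim that for conductors $q^*>(\log N)^{D}$ the restricted large sieve ``yields a power saving $\ll_{\delta}N^{1-\delta_0}$ for each fixed $a$.'' That claim fails precisely in the range that makes this theorem nontrivial: conductors $q^*=ab$ whose part $a$ supported on the primes of $q_0$ is large (it may be a power of $N$; in the application $q_0=W\approx N^{\epsilon}$) while the cofactor $b$ coprime to $q_0$ is small, e.g.\ $q^*=a$ a large divisor of $q_0$. There the surviving weight is only $1/\phi(b)$, with no decay in $a$, and the large sieve gives nothing: its diagonal term $N$ is not divided by anything when the family of moduli is a single $a$ (or the sparse set $ab$, $b\le(\log N)^{O(1)}$), since the usual per-modulus saving comes entirely from averaging over all moduli up to $Q$. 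A short bookkeeping check shows your bilinear estimates only become useful when $b\gg a(\log N)^{C+O(1)}$, so a huge range is left untreated. Nor can you rescue it by extending your small-conductor argument character-by-character: Lemma \ref{lem:4.1} gives a zero-free region of width $\asymp\log_2N/(\epsilon\log N)$, hence an individual saving of only a power of $\log N$ in $\psi(N,\chi^*)$, while the number of characters to such moduli is as large as $N^{\epsilon}$; the product is far bigger than $N(\log N)^{-C}$.

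The missing idea — and the way the paper actually argues — is to stay at the level of the whole family and count zeros rather than characters. One inserts the truncated explicit formula into the weighted sum over all relevant primitive conductors $ab$, and then (i) for zeros close to $s=1$ and of small height, with $b$ at most a fixed power of $\log N$, the moduli $ab$ satisfy $ab<N^{2\epsilon}$ and $P^{+}(ab)<N^{2\epsilon/\log_2 N^{2\epsilon}}$ by the hypotheses on $q_0$, so Lemma \ref{lem:4.1} says there are \emph{no} such zeros at all apart from the excluded exceptional one — the contribution vanishes no matter how many characters there are; (ii) all remaining zeros (farther from $1$, or of large height, or with $b$ large) are counted by a zero-density estimate for the family (the paper uses Montgomery's bound $N^*(\sigma,A,B,T)\ll(A^2B^2T)^{3(1-\sigma)/(2-\sigma)}(\log(ABT))^9$), whose saving, against the factor $N^{\Re\rho}$, beats the zero count because $A^2B^2T\le N^{1-\delta}$ when $\epsilon$ is small relative to $\delta$. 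So the correct dichotomy is by the size of the coprime-to-$q_0$ part $b$ and the location of the zero, not by the size of $q^*$, and a family zero-density input is indispensable; Vaughan plus the large sieve cannot substitute for it on the ``$q^*$ essentially built from primes of $q_0$'' range. With that replacement your outline would essentially become the paper's proof; as written, the theorem's central case is not established.
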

\begin{proof} 
The result follows from  standard zero  density estimates combined 
with  the  zero free  region  for smooth  moduli  given  in  Lemma 
\ref{lem:4.1}.
We assume that  $(q_0, \Z[N^{2\epsilon}]) = 1$,  for otherwise the 
result is trivial. 
First, we rewrite $\psi(N;q,a)$ as 
$\phi(q)^{-1} \sum_{\chi} \psi(N,\chi) \bar{\chi}(a)$, where 
$\psi(N,\chi) = \sum_{n \le N} \chi(n) \Lambda(n)$. 
Next,  we  replace   $\psi(N,\chi)$  with  $\psi(N,\chi')$,  where 
$\chi'$ is the primitive character that induces $\chi$. 
The error in making this change is at most
\[
  \sum_{q < N^{\theta}} \frac{1}{\phi(q)}
   \hspace{3pt}
    \sum_{\chi \bmod q}
     \hspace{3pt}
      \sums[n \le N][(n,q) \ne 1] \Lambda(n)
       \ll N^{\theta} (\log{N})^2,
\]
which is acceptable.
Since   $\psi(N,\chi'_0)  =  \psi(N)$   holds  for  the  principal 
character $\chi_0 \bmod q$, we need only bound
\begin{align}
 \label{eq:4.11}
  \sums[q \, < \, N^{\theta}]
       [q_0 \, \mid \,\, q \hspace{8pt}]
       [(q, \, {\Z[N^{2\epsilon}]}) \, = \, 1] 
   \frac{1}{\phi(q)}
    \hspace{3pt}
     \sums[\chi \bmod{q}][\chi \ne \chi_0]    
     |\psi(N,\chi')|.
\end{align}
For  nonprincipal   characters   $\chi$,   the   explicit  formula 
\cite[\S19, (13)--(14)]{DAV} yields
\begin{align}
 \label{eq:4.12}
 |\psi(N;\chi)|
   \le 
    \sum_{|\rho| < N^{1/2}}   
     \frac{N^{\Re(\rho)}}{|\rho|}
    + \Oh[N^{1/2}(\log{qN})^2],
\end{align}
where  the sum is over nontrivial zeros of  $L(s,\chi)$  with real 
part at least $1/2$. 
The error term here makes a negligible contribution.

We substitute  \eqref{eq:4.12}  into  \eqref{eq:4.11}, and rewrite 
the  summation  in  terms  of  the  moduli  $q'$  of the primitive 
characters that are present. 
Thus, we need to bound
\begin{align*}
  & 
    \sum_{q' \le N^{\theta}} 
     \hspace{3pt}
      \sumss['][\chi \bmod q'] 
       \hspace{3pt}
        \sum_{|\rho| < N^{1/2}} \frac{N^{\Re(\rho)}}{|\rho|}
         \sums[q < N^{\theta}][{[q_0,q']} \,\mid \, q]
              [(q, \, {\Z[N^{2\epsilon}]}) \, = \, 1]    
           \frac{1}{\phi(q)} 
  \\
  & \hspace{30pt}
     \ll \frac{\log N}{\phi(q_0)}
      \sum_{a \, \mid \, q_0}
       \sums[b < N^{\theta}/a]
            [(b,\, q_0{\Z[N^{2\epsilon}]}) \, = \, 1] 
        \frac{1}{\phi(b)}
         \hspace{3pt}
          \sumss['][\chi \bmod ab]
           \hspace{3pt}
            \sum_{|\rho| < N^{1/2}} \frac{N^{\Re(\rho)}}{|\rho|}.
\end{align*}
Here we have written  $q' = ab$  with $a \mid q_0$ and $(b,q_0)=1$
(we are supposing $q_0$  is squarefree);  we use $\sumsstxt[']$ to 
denote a sum restricted to primitive characters.

We cover the sum over $a$ and $b$  with  $\Oh[(\log{N})^2]$ dyadic 
ranges, and the sum over  zeros with  $\Oh[(\log N)^2]$  sums over 
zeros that satisfy
\[
 \Re(\rho)   \in I_m = [1 - m/\log N, 1 - (m - 1)/\log N], \quad
 |\Im(\rho)| \in J_n = [n - 1, 2n],
\]
where $n$ runs over powers of $2$.
Hence, we are left to bound
\begin{align}
 \label{eq:4.13}
  \begin{split}
  &
    \frac{(\log N)^5}{\phi(q_0)}
     \sup_{
           \substack{
                     2m < \log N  \\ 
                     2n < N^{1/2} \\ 
                      A < q_0, \, AB < N^{\theta}
                    }
          }
      \hspace{3pt} 
       \sums[A \le a < 2A]
            [B \le b < 2B]
            [a \, \mid \, q_0, \, 
               (b, \, q_0{\Z[N^{2\epsilon}]}) \, = \, 1]
        \frac{1}{\phi(b)}
         \hspace{6pt} 
          \sumss['][\chi \bmod{ab}]  
           \hspace{3pt} 
            \sums[\Re(\rho)  \, \in \, I_m]
                 [|\Im(\rho)| \, \in \, J_n]
             \frac{N^{\Re(\rho)}}{|\rho|} 
  \\
  & \hspace{75pt} 
     \ll 
      \frac{N(\log N)^6}{\phi(q_0)}
       \sup_{
             \substack{
                       m < \log N  \\ 
                       n < N^{1/2} \\ 
                       A < q_0, \, AB < N^{\theta}
                      }
            }
        \frac{\e^{-m}}{nB}
         N^*\Big( 1 - \frac{m}{\log N}, A, B, n \Big),
  \end{split}
\end{align}
where
\[
 N^*(\sigma,A,B,T)
  =
   \sums[A \le a < 2A][a \, \mid \, q_0]
    \sums[B \le b < 2B]
         [(b,\, q_0 {\Z[N^{2\epsilon}]}) \,= \, 1] 
     \hspace{3pt}
      \sumss['][\chi \bmod ab] 
       \hspace{3pt}
        \sums[|\Im(\rho)| \le T][\Re(\rho) \ge \sigma] 1.
\]

We first consider the range $m \ge C'\log_2 N$ where 
$C' = (C + 15)/\delta$. 
Montgomery's estimate \cite[Theorem 12.2]{MON} shows that
\begin{align}
 \label{eq:4.14}
  N^*(\sigma, A, B, T) 
   \ll (A^2 B^2 T)^{3(1 - \sigma)/(2 - \sigma)} (\log(ABT))^9.
\end{align}
For  $1/2  \le  \sigma  \le 1$,  we  have  $1/(2 - \sigma) \le 1$, 
$6(1 - \sigma)/(2 - \sigma) \le 1 + 2(1 - \sigma)$ and 
$3(1 - \sigma)/(2 - \sigma) \le 1$. 
For $4 \epsilon \le \delta$ we have 
$
 \log(A^6 B^2) 
  \le \log{ N^{2\theta + 4\epsilon} }
   \le (1 - \delta)\log N.
$
Thus, \eqref{eq:4.14} implies
\[
  N^*\Big(1 - \frac{m}{\log N}, A, B, n \Big)
      \ll (\log N)^9 nB\exp(m(1 - \delta)).
\]
After   using  this  bound,   we   see   that   the   supremum  in 
\eqref{eq:4.13}, when restricted to  $m  \ge  C'\log_2 N$,  occurs 
when  $n = 1$,  $m = C'\log_2 N$, $A = q_0$, $B = \log N$, and the 
overall  contribution is  $\ll \phi(q_0)^{-1} N(\log N)^{-C}$,  as 
required.

We now consider the range $m \le C'\log_2 N$. 
In this region \eqref{eq:4.14} implies
\[
  N^*\Big(1 - \frac{m}{\log N}, A, B, n \Big)
   \ll 
   (\log N)^9 n^{1/2} B^{1/2}  
     \exp\Big( 6m \frac{\log A}{\log N} \Big).
\]
After  applying  this  bound,  we  see   the  supremum  occurs  at 
$m = 0$  (since  $A^6 \le N$),  and then it is easy to see that if 
either  $n \ge (\log N)^{2C'}$  or $B \ge (\log N)^{2C'}$ then the 
bound is acceptable. 
We therefore restrict our attention to $n, B < (\log N)^{2C'}$. 

By  Lemma  \ref{lem:4.1},  if  $\chi \bmod q$  is  primitive  with 
$q < N^{2\epsilon}$ and 
$\gp{q}  <  N^{2\epsilon/\log_2 N^{2\epsilon}}$,  then $L(s,\chi)$ 
has no zeros in the region
\[
  \Re(s) > 1 - c\frac{\log_2 N^{2\epsilon}}{\log N^{2\epsilon}},         
   \quad
   |\Im(s)| 
     \le 
      \exp\br{\sqrt{\log N^{2\epsilon}}/\log_2 N^{2\epsilon}},
\]
unless $(q,\Z[N^{2\epsilon}]) \ne 1$.
If  $\epsilon$ is sufficiently small in terms of $C$, $\delta$ and  
$c$,  then  this  region  covers  the  range   $m \le C'\log_2 N$, 
$n \le (\log N)^{2C'}$.
We are supposing that $q_0 < N^{\epsilon}$, 
$\gp{q_0} < N^{\epsilon/\log_2 N}$  and  $B < (\log{N})^{2C'}$, so 
for all remaining moduli $q' = ab$ we certainly have 
$q' < N^{2\epsilon}$ and 
$\gp{q'} < N^{2\epsilon/\log_2 N^{2\epsilon}}$.
Our assumptions also imply that $(q',\Z[N^{2\epsilon}]) = 1$.
\end{proof}

\begin{theorem}
 \label{thm:4.3}
Let $m$, $k$ and $\epsilon = \epsilon(k)$ be fixed.
If $k$ is a sufficiently large multiple of $(8m+1)(8m^2+8m)$ and 
$\epsilon$ is sufficiently  small, there is some  
$N(m,k,\epsilon)$  such that the following holds for all 
$N \ge N(m,k,\epsilon)$.
With $\Z[N^{4\epsilon}]$ given by \eqref{eq:4.8}, let
\begin{align}
 \label{eq:4.15}
  w = \epsilon\log N   
  \quad \text{and} \quad 
    W = \prods[p \le w]
              [\hspace{8pt}p \, \nmid\, {\Z[N^{4\epsilon}]}] p.
\end{align} 
Let $\A  =  \{h_1, \ldots, h_k\}$  be an admissible $k$-tuple such 
that
\begin{align}
 \label{eq:4.16}
  0 \le h_1,\ldots,h_k \le N
\end{align}
and
\begin{align}
 \label{eq:4.17}
  \textstyle  
 p \, \Big\vert  \prod_{1 \le i < j \le k}(h_j - h_i) 
    \implies p \le w.
\end{align}
Let 
\begin{align}
 \label{eq:4.18}
 \A   = \A_1^{(1)} \cup \cdots \cup \A_{8m+1}^{(1)}   
      = \A_1^{(2)} \cup \cdots \cup \A_{8m^2+8m}^{(2)}
\end{align}
be two partitions of $\A$ into $8m + 1$ and $8m^2 + 8m$ sets of 
equal size respectively.
Finally, let $b$ be an integer such that  
\begin{align}
 \label{eq:4.19}
 \textstyle
  \br{ \prod_{i = 1}^k (b + h_i),W } = 1.
\end{align}
\textup{(}i\textup{)} 
There is some  $n_1 \in (N,2N]$ with  $n_1 \equiv b \bmod W$, and 
some set of $m+1$ distinct indices 
$
 \{i_1^{(1)},\ldots,i_{m + 1}^{(1)}\}
  \subseteq 
   \{1,\ldots,8m + 1\},
$ 
such that 
\begin{align}
 \label{eq:4.20}
 |\A_i^{(1)}(n_1) \cap \PP| = 1
   \quad 
    \text{for all $i \in \{i_1^{(1)},\ldots,i_{m+1}^{(1)}\}$}.
\end{align}
\textup{(}ii\textup{)}  
There is some $n_2 \in (N,2N]$ with  $n_2 \equiv b \bmod W$, and 
some set of $m + 1$ distinct indices 
$
 \{i_1^{(2)},\ldots,i_{m + 1}^{(2)}\}
  \subseteq 
   \{1,\ldots,8m^2+8m\},
$
such that 
\begin{align}
 \label{eq:4.21}
  \begin{split}
 |\A_i^{(2)}(n_2) \cap \PP| 
  & = 1
   \quad 
    \text{for all $i \in \{i_1^{(2)},\ldots,i_{m + 1}^{(2)}\}$},  
 \\
  |\A_i^{(2)}(n_2) \cap \PP| 
  & \le 1
     \quad 
      \text{for all $i_1^{(2)}<i<i_{m+1}^{(2)}$}.
  \end{split}
\end{align}
\end{theorem}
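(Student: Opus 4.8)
The plan is to run the Maynard--Tao multidimensional sieve \cite{MAY,POLY} along the residue class $n\equiv b\bmod W$, with the level of distribution furnished by the modified Bombieri--Vinogradov theorem (Theorem~\ref{thm:4.2}) applied with $q_0=W$, and then to read off both conclusions from two elementary counting inequalities combined with a standard second moment estimate for prime pairs. First we fix a square-integrable $F\colon[0,\infty)^k\to\RR$, not identically zero, supported on the simplex $\mathcal{R}_k=\{t\in[0,1]^k:t_1+\cdots+t_k\le 1\}$ (the $\eta=0$ case of \eqref{eq:4.3}), set $R=N^{\theta/2-\epsilon}$, and define the weights $\lambda_{d_1,\ldots,d_k}$ in terms of $F$ as in \cite[\S5]{MAY}, except that the support is further restricted to tuples with $\prod_i d_i$ squarefree and $\bigl(\prod_i d_i,\,W\,\Z[N^{4\epsilon}]\bigr)=1$; put $w_n=\bigl(\sum_{d_i\mid n+h_i\,\forall i}\lambda_{d_1,\ldots,d_k}\bigr)^2\ge 0$. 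Throughout, sums over $n$ run over $n\in(N,2N]$ with $n\equiv b\bmod W$.

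The crux is the uniform evaluation of $\sum_n w_n$ and of $\sum_n\ind{\PP}(n+h_i)w_n$ for each $i$, uniformly over all admissible $\A$ satisfying \eqref{eq:4.16}--\eqref{eq:4.17}. On expanding the squares and interchanging summations, $\sum_n w_n$ reduces to counting $n$ in progressions to moduli $W\prod_i[d_i,e_i]$, which is routine, while $\sum_n\ind{\PP}(n+h_i)w_n$ reduces to averages of $\psi(3N;q,a)-\psi(3N)/\phi(q)$ over moduli $q=W\prod_j[d_j,e_j]$ with $W\mid q$, $q<N^\theta$, and $(q,\Z[N^{4\epsilon}])=1$ (the last forced by our restriction on the support of $\lambda$) --- precisely the range covered by Theorem~\ref{thm:4.2} applied with $q_0=W$ and with its parameter taken to be $2\epsilon$ (which is why the relevant exceptional modulus is $\Z[N^{4\epsilon}]$); here one uses $\log W=\sum_{p\le w}\log p<2\epsilon\log N$, $\gp{W}\le w$, and $R=N^{\theta/2-\epsilon}$ so that $W\prod_j[d_j,e_j]<WR^2<N^\theta$. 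Two structural features make the combinatorics go through in this generality: restricting $\lambda$ to $\mathbf{d}$ with $\prod_i d_i$ squarefree makes the $d_i$ automatically pairwise coprime; and \eqref{eq:4.17} guarantees that no prime $p>w$ divides any $h_j-h_i$, so that for $i\ne j$ the congruences $n\equiv-h_i\bmod[d_i,e_i]$ and $n\equiv-h_j\bmod[d_j,e_j]$ are jointly solvable only when $(d_i,e_j)=1$; consequently the main term factors over the $k$ coordinates exactly as in the case of bounded $\A$, the only new ingredient being the harmless factor $\Z[N^{4\epsilon}]/\phi(\Z[N^{4\epsilon}])=1+\Oh[1/\log_2 N]$ supplied by \eqref{eq:4.9}. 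Carrying out the computation of \cite[\S5--6]{MAY} then gives
\[
 \sum_n w_n=\br{1+\oh[1]}\mathcal B\,I_k(F),
 \qquad
 \sum_n\ind{\PP}(n+h_i)w_n=\br{1+\oh[1]}\tfrac{\theta}{2}\,\mathcal B\,J_{i,1}(F)
 \quad(1\le i\le k),
\]
where $\mathcal B=\mathcal B(N,W,R)>0$ is independent of $i$ and the $\oh[1]$'s are uniform in $\A$; writing $\mu_F=\tfrac\theta2\sum_{i=1}^k J_{i,1}(F)/I_k(F)$, it follows that $\sum_n\bigl(\sum_{i=1}^k\ind{\PP}(n+h_i)\bigr)w_n=(1+\oh[1])\,\mu_F\sum_n w_n$, and since $\sup_F\mu_F=\tfrac\theta2 M_k\ge\tfrac\theta2(\log k-c)$ by \eqref{eq:4.4} while $k$ is a large multiple of $(8m+1)(8m^2+8m)$, we may choose $F$ so that $\mu_F$ exceeds any prescribed bound. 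I expect this paragraph --- keeping track of the large modulus $W$ throughout, checking the level-of-distribution budget $WR^2<N^\theta$ so that Theorem~\ref{thm:4.2} applies, and accounting for the possible exceptional modulus $\Z[N^{4\epsilon}]$ --- to be the main obstacle; the remaining steps are soft.

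For part~(i): write $\A=\A_1^{(1)}\cup\cdots\cup\A_{8m+1}^{(1)}$, set $P_j(n)=|\A_j^{(1)}(n)\cap\PP|$ and $L(n)=\#\{j:P_j(n)=1\}$, and let $Q(n)$ be the number of pairs $1\le i<i'\le k$ with $h_i,h_{i'}$ in a common part of the partition and both $n+h_i,n+h_{i'}\in\PP$. Since $(x-1)^+\le\binom{x}{2}$ for integers $x\ge 0$, we have $\#\{j:P_j(n)\ge 1\}\ge\sum_i\ind{\PP}(n+h_i)-Q(n)$ and $\#\{j:P_j(n)\ge 2\}\le Q(n)$, whence $L(n)\ge\sum_i\ind{\PP}(n+h_i)-2Q(n)$. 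A standard second moment estimate (again of Bombieri--Vinogradov strength via Theorem~\ref{thm:4.2}; cf.\ \cite{MAY,POLY}) shows that $\sum_n\ind{\PP}(n+h_i)\ind{\PP}(n+h_{i'})w_n=\oh[1]\sum_n w_n$ for every pair $i\ne i'$, uniformly in $\A$ (the uniformity uses \eqref{eq:4.17} to keep the relevant local densities bounded), so that $\sum_n Q(n)w_n=\oh[1]\sum_n w_n$. Combining with the displayed asymptotics and fixing $F$ with $\mu_F>m$, we get $\sum_n(L(n)-m)w_n\ge(\mu_F-m-\oh[1])\sum_n w_n>0$, so some $n_1\in(N,2N]$ with $n_1\equiv b\bmod W$ has $L(n_1)\ge m+1$; taking $\{i_1^{(1)},\ldots,i_{m+1}^{(1)}\}$ to be any $m+1$ indices $j$ with $P_j(n_1)=1$ establishes \eqref{eq:4.20}.

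For part~(ii): write $\A=\A_1^{(2)}\cup\cdots\cup\A_{8m^2+8m}^{(2)}$, define $L(n)$ and $Q(n)$ as in part~(i) but relative to this partition, and put $H(n)=\#\{j:|\A_j^{(2)}(n)\cap\PP|\ge 2\}\le Q(n)$. Since again $L(n)\ge\sum_i\ind{\PP}(n+h_i)-2Q(n)$, the displayed asymptotics give $\sum_n L(n)w_n\ge(\mu_F-\oh[1])\sum_n w_n$ and $\sum_n H(n)w_n=\oh[1]\sum_n w_n$; fixing $F$ with $\mu_F>m+1$ then yields $\sum_n\bigl(L(n)-(m+1)(H(n)+1)\bigr)w_n\ge(\mu_F-m-1-\oh[1])\sum_n w_n>0$, so some $n_2\in(N,2N]$ with $n_2\equiv b\bmod W$ satisfies $L(n_2)\ge(m+1)(H(n_2)+1)$. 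The $H(n_2)$ ``heavy'' indices split $\{1,\ldots,8m^2+8m\}$ into at most $H(n_2)+1$ intervals of indices containing no heavy index, so by the pigeonhole principle one of these intervals contains at least $m+1$ of the $L(n_2)$ indices $j$ with $|\A_j^{(2)}(n_2)\cap\PP|=1$, say $i_1^{(2)}<\cdots<i_{m+1}^{(2)}$. Every index strictly between $i_1^{(2)}$ and $i_{m+1}^{(2)}$ then lies in that same interval, hence is not heavy, which is exactly \eqref{eq:4.21}.
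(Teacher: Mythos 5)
Your overall framework --- running the Maynard--Tao sieve along $n\equiv b\bmod W$ with Theorem~\ref{thm:4.2} applied at $q_0=W$, handling the exceptional modulus via $\Z[N^{4\epsilon}]$, and then extracting parts (i) and (ii) by counting ``light'' parts and a pigeonhole over the gaps between ``heavy'' indices --- is the same as the paper's, and your elementary reductions (the inequality $L(n)\ge\sum_i\ind{\PP}(n+h_i)-2Q(n)$ and the final pigeonhole in part (ii)) are sound. The gap is the step you dismiss as ``standard'': the claim that $\sum_n\ind{\PP}(n+h_i)\ind{\PP}(n+h_{i'})w_n=o(1)\sum_n w_n$ for $i\ne i'$. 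No Bombieri--Vinogradov-strength input can deliver this: with two prime indicators the sum cannot be evaluated asymptotically at all (that is a twin-prime-type problem), and the provable upper bound is a positive constant depending on $k$ (not tending to $0$ with $N$) times $\sum_n w_n$. Worse, the claim is actually false in the regime you need: if it held for every pair, then expanding $\sum_n\bigl(\sum_i\ind{\PP}(n+h_i)\bigr)^2w_n$ as the diagonal plus the pair terms and comparing with the Cauchy--Schwarz lower bound $\mu_F^2(1+o(1))\sum_n w_n$ forces $\mu_F\le 1+o(1)$, contradicting the choice $\mu_F>m\ge 1$ on which your whole argument rests. Since the partitions in the theorem are arbitrary, the within-part pairs entering $Q(n)$ are in no way special, so $\sum_n Q(n)w_n$ is genuinely of order $\asymp_k\sum_n w_n$, and the positivity arguments in both your parts (i) and (ii) collapse.

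This missing estimate is precisely the new content of the paper's proof. Lemma~\ref{lem:4.5}(iii) proves the correct substitute: inserting an auxiliary upper-bound sieve $\ind{\PP}(n+h_\ell)\le\bigl(\sum_{e\mid n+h_\ell}\mu(e)G(\log e/\log R)\bigr)^2$ with $G$ supported on $[0,1/4-2\delta]$ (so that the combined moduli still fit under the level of distribution of Theorem~\ref{thm:4.2}) gives $\sum_n\ind{\PP}(n+h_i)\ind{\PP}(n+h_{i'})w_n\le(4+O(\delta))\frac{N}{W}B^{-k}L_k(F)$. Lemma~\ref{lem:4.6} then constructs $F$ with $J_k(F)/I_k(F)$ of size $\rho\delta\log k/k$ while $L_k(F)/I_k(F)$ is of size $(\rho\delta\log k/k)^2$, and with $\rho\delta\log k=2m$ the total within-part pair penalty is roughly $8m^2/(8m+1)$ in part (i) (and carries an extra factor $m+1$ in part (ii)), which is just beaten by the surplus $2m-m=m$. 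This delicate balance is exactly why the partitions have $8m+1$ and $8m^2+8m$ parts and why $k$ must be a large multiple of $(8m+1)(8m^2+8m)$; in your write-up these constants play no role, which is the tell-tale sign that the quantitative heart of the theorem has been assumed rather than proved. To repair the argument you must replace the $o(1)$ claim by a Lemma~\ref{lem:4.5}(iii)-type bound and redo the bookkeeping with explicit constants, i.e.\ essentially reproduce the paper's weighted sums $S$ and $S'$.
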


If we fix $m$, $k$ and $\eta \in [0,1)$ with   
$M_{k,\eta} - 4m > 0$ (where $M_{k,\eta}$ is as in 
\eqref{eq:4.3}), and if we assume the remaining hypotheses of 
Theorem \ref{thm:4.3} hold (disregarding \eqref{eq:4.18}), then we 
can show, for all $N \ge N(m,k,\epsilon)$, that 
$|\A(n) \cap \PP| \ge m + 1$ for at least one $n \in (N,2N]$ with 
$n \equiv b \bmod W$.
This follows from an essentially identical argument to that 
presented in \cite{MAY, POLY}, although there are two differences 
in our setting that potentially affect the argument. 
Namely, $w$ is considerably larger here than in \cite{MAY} or 
\cite{POLY} (we take $w = \epsilon\log N$ instead of 
$w = \log_3 N$), and the elements of $\A$ here may vary with $N$.

However, this actually only leads to weaker versions of 
Theorems \ref{thm:1.1} and \ref{thm:1.3}, for instance (cf.\  
\eqref{eq:4.4}, \eqref{eq:4.5}) with $k = 50$ instead of $k = 9$ 
in our main theorem (Theorem \ref{thm:1.1}), which is concerned 
with the case $m = 1$ of Theorem \ref{thm:4.3}.
The proof of Theorem \ref{thm:4.3}, given in Section 
\ref{subsec:4.2}, does not require such refined estimates as in 
\cite{MAY, POLY}, but does require an additional sieve upper 
bound, whose use had been considered by the authors of 
\cite{POLY}.

We remark that with  more  significant  modifications to  the 
argument presented in \cite{MAY,POLY}, it is in principle possible  
to  remove the requirement \eqref{eq:4.17} from  the statement of 
Theorem \ref{thm:4.3}.
We do not consider this here.

\subsection{Key estimates}
 \label{subsec:4.2}

Throughout this section (including Lemmas \ref{lem:4.4} -- 
\ref{lem:4.6}): 
$k$ is fixed;
$\delta > 0$ and $\epsilon > 0$ are fixed and satisfy 
$2\delta + 2\epsilon < \frac{1}{2}$ (as well as 
$\delta > 2\epsilon$ in Lemma \ref{lem:4.6} (iii));
$N$ is to be thought of as tending to infinity, hence is 
sufficiently large in terms of any fixed quantity; 
implicit constants may depend on any fixed quantity (though our 
notation will not indicate this explicitly); 
$\Z[N^{4\epsilon}]$ is given by \eqref{eq:4.8};
$w$, $W$, $\A = \{h_1,\ldots,h_k\}$ and $b$ are as in 
\eqref{eq:4.15} -- \eqref{eq:4.19}.
(Note that by the prime number theorem, 
$W < N^{2\epsilon}$, hence $N^{2\delta}W < N^{\theta}$ where 
$\theta < 1/2$, and likewise if $\delta \ge 2\epsilon$ then 
$N^{1/2 - \delta}W < N^{\theta}$ where $\theta < 1/2$.)

Also, $\lambda_{d_1,\ldots,d_k}$ are sieve weights given by 
\begin{align}
 \label{eq:4.22}
  \begin{split}
  \lambda_{d_1,\ldots,d_k}
 = \begin{cases}
    \big(
     \prod_{i=1}^k \mu(d_i)
    \big)
     \sum_{j=1}^J 
      \prod_{\ell = 1}^k 
       F_{\ell,j}
        \big(
         \frac{\log{d_\ell}}{\log{N}}
        \big) 
    \quad & \text{if $(d_1\cdots d_k, Z_{N^{4\epsilon}}) = 1$,} 
     \\
    0 & \text{otherwise,}
   \end{cases}
  \end{split}
\end{align}
for some fixed $J$ and fixed smooth nonnegative compactly 
supported functions 
$
F_{\ell,j} : [0,\infty+) \to \mathbb{R}
$ 
that are not identically $0$ and that satisfy the support 
restriction
\[
 \sup \big\{ \textstyle
       \sum_{\ell = 1}^k t_\ell : 
        \prod_{\ell = 1}^k F_{\ell,j}(t_\ell) \ne 0
      \big\}
  \le \delta,
\]
for each $j \in \{1,\ldots,J\}$.
This support condition implies $\lambda_{d_1,\ldots,d_k}$ is 
supported on $d_i$ with $\prod_{i = 1}^k d_i \le N^{\delta}$.
The fact that $J$, $F_{\ell,j}$ are fixed means we have the bound 
$\lambda_{d_1,\ldots,d_k} \ll 1$ uniformly in the $d_i$. 
To ease notation we put  
\[
 F(t_1,\ldots,t_k) 
  = 
   \sum_{j = 1}^J 
    \prod_{\ell = 1}^k 
     F'_{\ell,j}(t_\ell),
\]
$F'_{\ell,j}$ denoting the derivative of $F_{\ell,j}$, and we 
assume that we have chosen the $F_{\ell,j}$ such that $F$ is 
symmetric.
Also, we put
\[
 B = \frac{\phi(W)}{W}\log N.
\]

\begin{lemma}
 \label{lem:4.4}
If  
$
 F_1,\ldots,F_k, G_1,\ldots,G_k : [0,\infty+) \to \RR
$ 
are fixed smooth compactly supported functions, then
\begin{align*}
 \sumss['][d_1,\ldots,d_k][d_1',\ldots,d_k']
  \hspace{5pt}
   \prod_{j=1}^k
           \frac{\mu(d_j)\mu(d_j')} 
                {[d_j,d_j']}
            F_j\bigg(\frac{\log d_j}{\log N}\bigg)
            G_j\bigg(\frac{\log d_j'}{\log N}\bigg)
  = 
    (c + \oh[1])B^{-k},
\end{align*}
where  $\sumsstxt[']$  denotes summation with the restriction that 
$[d_1,d_1'],\ldots, [d_k,d_k'], W\Z[N^{4\epsilon}]$  are  pairwise 
coprime, and  
\[
 c = \prod_{j=1}^k \int_0^{\infty} F_j'(t_j)G_j'(t_j) \dd{t_j}.  
\]
The same holds if the  denominators  $[d_j,d_j']$  are replaced by 
$\phi([d_j,d_j'])$.
\end{lemma}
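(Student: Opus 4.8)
The plan is to evaluate this sum as a standard sieve main term, following the contour-integration method of \cite{GPY} (see also \cite{MAY}); the one genuinely new point is that the estimates must be uniform in $N$ as $w=\epsilon\log N\to\infty$. First I would reduce to an Euler product. The factors $\mu(d_j),\mu(d_j')$ restrict attention to squarefree $d_j,d_j'$, and the condition that $[d_1,d_1'],\dots,[d_k,d_k'],W\Z[N^{4\epsilon}]$ be pairwise coprime says precisely that every prime $p$ divides at most one of the $[d_j,d_j']$ and none dividing $W\Z[N^{4\epsilon}]$, so the sum is multiplicative in $p$. The local factor at $p\mid W\Z[N^{4\epsilon}]$ is $1$; at $p\nmid W\Z[N^{4\epsilon}]$ it equals $1+\sum_{j=1}^{k}(L_p^{(j)}-1)$, where $L_p^{(j)}(u,v)=1-p^{-1-u}-p^{-1-v}+p^{-1-u-v}$ is the Euler factor obtained by running $d_j,d_j'$ over $\{1,p\}$ against $\mu(d_j)\mu(d_j')d_j^{-u}d_j'^{-v}/[d_j,d_j']$. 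Since $L_p^{(j)}-1\ll p^{-1}$ for $u,v$ near $0$, one has $\prod_jL_p^{(j)}=\bigl(1+\sum_j(L_p^{(j)}-1)\bigr)\bigl(1+O(p^{-2})\bigr)$, and $\prod_{p>w}(1+O(p^{-2}))=1+O(1/w)=1+o(1)$; hence, up to a factor $1+o(1)$, the pairwise-coprimality constraint may be replaced by independence, and it suffices to evaluate $\prod_{j=1}^kD_j$ with $D_j(u,v)=\sum_{(dd',\,W\Z[N^{4\epsilon}])=1}\mu(d)\mu(d')[d,d']^{-1}d^{-u}d'^{-v}$.

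The heart of the matter is the local analysis of $D_j$. From the identity $L_p^{(j)}(u,v)=\tfrac{(1-p^{-1-u})(1-p^{-1-v})}{1-p^{-1-u-v}}\bigl(1+O(p^{-2})\bigr)$, uniform for $u,v$ in a fixed neighbourhood of $0$, one gets
\[
 D_j(u,v)=(1+o(1))\,\frac{\zeta(1+u+v)}{\zeta(1+u)\zeta(1+v)}\prod_{p\mid W\Z[N^{4\epsilon}]}\frac{1-p^{-1-u-v}}{(1-p^{-1-u})(1-p^{-1-v})},
\]
the $o(1)$ being a product of $(1+O(p^{-2}))$ over $p>w$. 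Near $u=v=0$ one has $\frac{\zeta(1+u+v)}{\zeta(1+u)\zeta(1+v)}=\frac{uv}{u+v}(1+o(1))$, and the finite product tends to $\prod_{p\mid W\Z[N^{4\epsilon}]}\frac{p}{p-1}=\frac{W}{\phi(W)}\cdot\frac{\Z[N^{4\epsilon}]}{\phi(\Z[N^{4\epsilon}])}=(1+o(1))\frac{W}{\phi(W)}$ by \eqref{eq:4.9}; thus $D_j(u,v)=(1+o(1))\frac{W}{\phi(W)}\frac{uv}{u+v}$ for $|u|,|v|\ll1/\log N$.

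To extract the main term I would represent each $F_j(\tfrac{\log d_j}{\log N})$ and $G_j(\tfrac{\log d_j'}{\log N})$ by an integral transform turning $d_j^{\pm\cdot}$ into the variables of the Dirichlet series above --- for instance $F_j(\tfrac{\log d_j}{\log N})=\tfrac{\log N}{2\pi i}\int\widetilde{F_j}(u\log N)d_j^{-u}\dd{u}$ with $\widetilde{F_j}(s)=\int_0^\infty F_j(x)\e^{sx}\dd{x}$, or, equivalently, writing $F_j(x)=-\int_x^\infty F_j'(t)\dd{t}$ and applying a truncated Perron formula to $\prod_jD_j$ as in \cite{GPY}. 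Either way the sum becomes a $2k$-fold integral whose bulk, by the decay of the transforms, comes from $|u_j|,|v_j|\ll1/\log N$; inserting the approximation for $D_j$, bounding $1/(\zeta(1+u_j)\zeta(1+v_j))$ on the line $\Re=1$ by standard estimates, and rescaling $u_j=\alpha_j/\log N$, $v_j=\beta_j/\log N$ (so $\frac{W}{\phi(W)}\frac{u_jv_j}{u_j+v_j}$ becomes $B^{-1}\frac{\alpha_j\beta_j}{\alpha_j+\beta_j}$) leaves $\prod_jB^{-1}$ times a product of absolute integrals. In each such integral the two numerator factors $\alpha_j,\beta_j$ become, on inverting the transform, the derivatives $F_j'$ and $G_j'$, while the denominator $\alpha_j+\beta_j$ becomes a single integral $\int_0^\infty(\,\cdot\,)\dd{t}$; the net result is $\prod_jB^{-1}\int_0^\infty F_j'(t)G_j'(t)\dd{t}=(c+o(1))B^{-k}$. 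For the variant with $\phi([d_j,d_j'])$ in place of $[d_j,d_j']$, every denominator $\tfrac1p$ is replaced by $\tfrac1{p-1}=\tfrac1p+O(p^{-2})$, so $L_p^{(j)}$ acquires a factor $1+O(p^{-2})$, whose product over $p>w$ is $1+o(1)$; hence $D_j$, and therefore the main term, is unchanged.

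The step I expect to be the main obstacle is precisely this uniformity: one must check that all the ``singular-series'' corrections --- the products of $(1+O(p^{-2}))$ over $p>w$, and the ratio $\Z[N^{4\epsilon}]/\phi(\Z[N^{4\epsilon}])$ --- really are $1+o(1)$ as $w=\epsilon\log N\to\infty$, and that the truncation errors in the $2k$-fold contour integral (or in the truncated Perron formula) stay negligible even though $W$ can be as large as a small power of $N$. Everything else parallels the computations of \cite{GPY,MAY} and is routine once this bookkeeping is set up.
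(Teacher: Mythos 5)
Your argument is correct in outline, but it is worth knowing that the paper does not prove this lemma from scratch: its proof is a two-line citation of \cite[Lemma 4.1]{POLY}, which is exactly the statement with the modulus $W$ alone, combined with \eqref{eq:4.9} to absorb the extra factor coming from the exceptional prime $\Z[N^{4\epsilon}]$ (note that $(\Z[N^{4\epsilon}]/\phi(\Z[N^{4\epsilon}]))^k = 1 + o(1)$). What you have written is essentially a re-derivation of that cited lemma by the classical GPY/Maynard contour (or Fourier) method, and your handling of the exceptional factor via \eqref{eq:4.9} is the same as the paper's. The individual steps check out: the pairwise-coprimality constraint costs only $\prod_{p>w}(1+O(p^{-2})) = 1+O(1/w)$ because every prime $\le w$ divides $W\Z[N^{4\epsilon}]$; the finite Euler product over $p \mid W\Z[N^{4\epsilon}]$ at $|u|,|v| \ll 1/\log N$ is $(1+o(1))W/\phi(W)$ since $\sum_{p\le w}\log p/(p\log N) = o(1)$; and the rescaled main term $B^{-1}\alpha\beta/(\alpha+\beta)$ does produce $\int_0^\infty F_j'G_j'$, consistent with the Selberg-sieve benchmark $\approx W/(\phi(W)\log N)$. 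Two small cautions: the sum itself is not multiplicative (the weights $F_j,G_j$ are not), so the Euler-product manipulations are only legitimate after the integral transform has been introduced — your presentation inverts this order; and the $(1+o(1))$ corrections depend on $(u,v)$, so on the contours you need uniform bounds (the product over $p\mid W$ is at worst polylogarithmic in $\log N$, beaten by the rapid decay of the transforms of smooth compactly supported functions), which is precisely the uniformity issue you flag and which does go through. In short: your route buys a self-contained proof at the cost of redoing the bookkeeping that the paper outsources to \cite{POLY}.
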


\begin{proof} 
This is \cite[Lemma 4.1]{POLY} combined with the fact that, by 
\eqref{eq:4.9}, 
\[
(\Z[N^{4\epsilon}]/\phi(\Z[N^{4\epsilon}]))^k = 1 + \oh[1].
\] 
\end{proof}

We may now prove the main estimates of the Maynard--Tao sieve 
method.
To state the estimates we define
\begin{align*}
 I_k(F) 
  & = 
   \int_0^{\infty}
    \hspace{-6pt} \cdots 
     \int_0^{\infty}
      F(t_1,\ldots,t_k)^2 \dd{t_1}\ldots\dd{t_k}, 
 \\
 J_k(F) 
  & =
   \int_0^{\infty}
    \hspace{-6pt} \cdots 
     \int_0^{\infty}
    \bigg(
     \int_0^{\infty} F(t_1,\ldots,t_k) \dd{t_k}
    \bigg)^2
     \dd{t_1}\ldots\dd{t_{k-1}} 
 \intertext{and} 
 L_k(F) 
  & =
   \int_0^{\infty}
    \hspace{-6pt} \cdots 
     \int_0^{\infty}
    \bigg(
     \int_0^{\infty}
      \int_0^{\infty}
       F(t_1,\ldots,t_k) \dd{t_{k-1}} \dd{t_k}
    \bigg)^2
     \dd{t_1}\ldots\dd{t_{k-2}}.
\end{align*}

\begin{lemma}
 \label{lem:4.5}
\textup{(}i\textup{)}
We have
\begin{align*}
 \sums[N < n \le 2N][n \equiv b \bmod W]
  \bigg(
   \sums[d_1,\ldots,d_k][d_i \mid n + h_i]
    \lambda_{d_1,\ldots,d_k}
  \bigg)^2
   =
    (1 + \oh[1])\frac{N}{W}B^{-k}I_k(F).
\end{align*}
\textup{(}ii\textup{)}
For each $j \in \{1,\ldots,k\}$, we have
\begin{align*}
 \sums[N < n \le 2N][n \equiv b \bmod W]
  \ind{\PP}(n + h_j)
   \bigg(
    \sums[d_1,\ldots,d_k][d_i \mid n + h_i]
     \lambda_{d_1,\ldots,d_k}
   \bigg)^2
   =
    (1 + \oh[1])\frac{N}{W}B^{-k}J_k(F).
\end{align*}
\textup{(}iii\textup{)}
For each pair $j,\ell \in \{1,\ldots,k\}$, $j \ne \ell$, we have
\begin{align*}
 \sums[N < n \le 2N][n \equiv b \bmod W]
  \ind{\PP}(n + h_j)
   \ind{\PP}(n + h_{\ell})
    \bigg(
     \sums[d_1,\ldots,d_k][d_i \mid n + h_i]
      \lambda_{d_1,\ldots,d_k}
    \bigg)^2
   \le
    (4 + \Oh[\delta])\frac{N}{W}B^{-k}L_k(F).
\end{align*}
\end{lemma}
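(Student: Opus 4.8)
The plan, for all three parts, is: expand the square, exchange the order of summation, evaluate the resulting inner sum over $n$, and reduce to Lemma~\ref{lem:4.4}. After expanding and exchanging, each part reduces to estimating $\sum_{d_1,\ldots,d_k}\sum_{d_1',\ldots,d_k'}\lambda_{d_1,\ldots,d_k}\lambda_{d_1',\ldots,d_k'}$ times an inner sum over $n\in(N,2N]$ with $n\equiv b\bmod W$ and $[d_i,d_i']\mid n+h_i$ for $1\le i\le k$, the inner summand carrying the extra weight $1$, $\ind{\PP}(n+h_j)$, or $\ind{\PP}(n+h_j)\ind{\PP}(n+h_\ell)$ in parts (i), (ii), (iii) respectively. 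The key preliminary observation is that the summand vanishes unless $[d_1,d_1'],\ldots,[d_k,d_k'],W\Z[N^{4\epsilon}]$ are pairwise coprime --- exactly the restriction $\sum'$ of Lemma~\ref{lem:4.4}: the $\lambda$'s are supported on squarefree tuples coprime to $\Z[N^{4\epsilon}]$; a prime $p$ dividing both $[d_i,d_i']$ and $[d_j,d_j']$ with $i\ne j$ would divide $h_i-h_j$, hence satisfy $p\le w$ by \eqref{eq:4.17} and divide $W$; but a prime $p\mid[d_i,d_i']$ with $p\mid W$ would divide $b+h_i$ by \eqref{eq:4.19} (since $n\equiv b\bmod W$), which is impossible. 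For part~(i), the inner count is, by the Chinese remainder theorem, $N/(W\prod_i[d_i,d_i'])+\Oh[1]$; the $\Oh[1]$ term, summed against $|\lambda_{d_1,\ldots,d_k}\lambda_{d_1',\ldots,d_k'}|\ll1$ over $d_1\cdots d_k,\,d_1'\cdots d_k'\le N^\delta$, contributes $\ll N^{2\delta}(\log N)^{\Oh[1]}=\oh[1]\cdot(N/W)B^{-k}$ (using $W<N^{2\epsilon}$, $B^{-k}\asymp(\log N)^{-k}(W/\phi(W))^k$, and $2\delta+2\epsilon<\tfrac12$), and the main term is $(N/W)\sum'\lambda_{d_1,\ldots,d_k}\lambda_{d_1',\ldots,d_k'}/\prod_i[d_i,d_i']$. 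Substituting \eqref{eq:4.22}, expanding the two $J$-fold sums, and applying Lemma~\ref{lem:4.4} to each of the $J^2$ pieces gives $(1+\oh[1])B^{-k}\sum_{j_1,j_2}\prod_i\int_0^\infty F_{i,j_1}'(t)F_{i,j_2}'(t)\,\dd{t}=(1+\oh[1])B^{-k}I_k(F)$ by the definition of $F$.

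For part~(ii), since $d_j\mid n+h_j$ with $n+h_j$ prime and $d_j\le N^\delta<n+h_j$, we must have $d_j=d_j'=1$, so the sum runs over $d_i,d_i'$ with $i\ne j$ only. Combining congruences into $n\equiv a\bmod q$ with $q=W\prod_{i\ne j}[d_i,d_i']$, and noting $(a+h_j,q)=1$ by the argument of the preceding paragraph, the inner sum is $\sum_{N<n\le 2N,\;n\equiv a\,(q)}\ind{\PP}(n+h_j)$, which I would replace by $(\log N)^{-1}(\psi(2N;q,a+h_j)-\psi(N;q,a+h_j))$ up to a negligible error; its main term is $(1+\oh[1])N/(\phi(q)\log N)$, and the total error, after organizing by $q$ (multiplicity $\ll\tau_{\Oh[k]}(q/W)$, absorbed by a routine Cauchy--Schwarz step), is $\ll_C N/(\phi(W)(\log N)^C)$ for every $C$ by Theorem~\ref{thm:4.2}. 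One applies that theorem with $2\epsilon$ in place of $\epsilon$ (so its exceptional modulus is our $\Z[N^{4\epsilon}]$) and with $q_0=W$, which is legitimate because $W$ is squarefree with $W<N^{2\epsilon}$ and $\gp{W}\le\epsilon\log N$, $(W,\Z[N^{4\epsilon}])=1$, and every modulus $q<N^{2\delta}W<N^{\theta}$ with $\theta<\tfrac12$ satisfies $(q,\Z[N^{4\epsilon}])=1$. The surviving main term, $N(\phi(W)\log N)^{-1}\sum'\lambda_{d_1,\ldots,d_k}\lambda_{d_1',\ldots,d_k'}/\prod_{i\ne j}\phi([d_i,d_i'])$, is evaluated by the $\phi$-version of Lemma~\ref{lem:4.4} in the $k-1$ variables $i\ne j$, using $F_{j,r}(0)=-\int_0^\infty F_{j,r}'(t)\,\dd{t}$ for the $d_j=1$ slot and the symmetry of $F$; it comes out to $(1+\oh[1])B^{-(k-1)}J_k(F)$, and $N(\phi(W)\log N)^{-1}B^{-(k-1)}=(N/W)B^{-k}$ since $B=(\phi(W)/W)\log N$.

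For part~(iii) both $d_j=d_j'=1$ and $d_\ell=d_\ell'=1$ are forced, and after combining congruences into $n\equiv a\bmod q$ with $q=W\prod_{i\notin\{j,\ell\}}[d_i,d_i']$ the inner sum is $\#\{n\in(N,2N]:n\equiv a\,(q),\;n+h_j\in\PP,\;n+h_\ell\in\PP\}$. Here, instead of an asymptotic, I would apply a standard $2$-dimensional upper-bound sieve (Selberg or Brun) to the forms $n+h_j,\,n+h_\ell$ on the progression $n\equiv a\bmod q$: since $W\mid q$, every prime dividing $h_j-h_\ell$ is $\le w$ (so the two forms are as independent as possible modulo primes exceeding $w$), and $q<N^{2\delta}W$ stays below the available level of distribution (again $2\delta+2\epsilon<\tfrac12$ is what is needed), whence this count is $\le(4+\Oh[\delta])$ times its conjectural value --- the constant $4=2^2$ being the familiar loss of a $2$-dimensional upper-bound sieve, and the $\Oh[\delta]$ absorbing the sieve-boundary terms, which are $\Oh[\delta+\epsilon]=\Oh[\delta]$ given the sizes of $q$ and the sieve level. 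Summing over the remaining $d_i,d_i'$ via the $\phi$-version of Lemma~\ref{lem:4.4} in the $k-2$ variables $i\notin\{j,\ell\}$, using $F_{j,r}(0)=-\int_0^\infty F_{j,r}'(t)\,\dd{t}$ and $F_{\ell,r}(0)=-\int_0^\infty F_{\ell,r}'(t)\,\dd{t}$ and the symmetry of $F$ as in (ii), the double inner integral yields $L_k(F)$, giving the stated bound $\le(4+\Oh[\delta])(N/W)B^{-k}L_k(F)$.

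I expect part~(iii) to be the only substantive step: parts (i) and (ii) are essentially the standard Maynard--Tao computations, the sole non-classical ingredient being the use of Theorem~\ref{thm:4.2} in place of the classical Bombieri--Vinogradov theorem, which is forced because here $W$ is a power of $N$ rather than $\log_3 N$. The difficulty in (iii) is to run a $2$-dimensional upper-bound sieve uniformly over moduli $q$ that may be as large as a fixed small power of $N$, to obtain the clean constant $4$ with no worse dependence, and to track the $W$-adjusted local densities carefully enough that, after summation against the sieve weights $\lambda$, exactly the functional $L_k(F)$ emerges.
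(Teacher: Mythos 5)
Your treatments of (i) and (ii) are essentially the paper's own argument: expand the square, use \eqref{eq:4.17} and \eqref{eq:4.19} to force pairwise coprimality of $[d_1,d_1'],\ldots,[d_k,d_k'],W\Z[N^{4\epsilon}]$, evaluate the inner sum, and finish with Lemma \ref{lem:4.4}, the error in (ii) being handled by Cauchy--Schwarz together with Theorem \ref{thm:4.2} applied to the moduli $rW$ (i.e.\ $q_0=W$), exactly as in the paper.

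Part (iii), however, contains a genuine gap. You expand the square first, reduce the inner sum to the count of $n\equiv a\bmod q$ with $n+h_j$ and $n+h_\ell$ both prime, and then bound each such count by a two-dimensional upper-bound sieve. But the coefficients $\lambda_{d_1,\ldots,d_k}\lambda_{d_1',\ldots,d_k'}$ against which these counts are then summed are signed (they carry $\prod_i\mu(d_i)\mu(d_i')$), so a per-modulus upper bound does not give an upper bound for the whole expression; and inserting absolute values cannot repair this, because the signed sum over each pair $d_i,d_i'$ is of size $\asymp 1/\log N$ (this cancellation is precisely the source of the factor $B^{-k}$), whereas the unsigned sum is of size at least a positive power of $\log N$, so the resulting bound would exceed the required $(N/W)B^{-k}L_k(F)$ by powers of $\log N$. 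Since no asymptotic with a controllable error is available for the two-prime count, the ``expand first, sieve second'' order of operations cannot be salvaged.

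The paper keeps the $n$-summand nonnegative instead: after forcing $d_j=d_\ell=1$ it majorizes pointwise $\ind{\PP}(n+h_\ell)\le\big(\sum_{e\mid n+h_\ell}\mu(e)G(\log e/\log R)\big)^2$ for a smooth $G$ with $G(0)=1$ supported on $[0,1/4-2\delta]$, and only then expands. The resulting expression has exactly the shape treated in (ii) with $F$ replaced by $\tilde F(t_1,\ldots,t_k)=G'(t_\ell)\int_0^\infty F(t_1,\ldots,u_\ell,\ldots,t_k)\dd{u_\ell}$; the combined moduli are $rW$ with $r\le N^{1/2-\delta}$, so Theorem \ref{thm:4.2} still applies, and one obtains $(1+\oh[1])\frac{N}{W}B^{-k}L_k(F)\int_0^\infty G'(t)^2\dd{t}$, after which choosing $G$ to approximate $1-t/(1/4-\delta)$ gives $\int_0^\infty G'(t)^2\dd{t}\le 4+\Oh[\delta]$. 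Note in particular that the constant $4$ arises as the loss of a one-dimensional Selberg majorant at level about $N^{1/4-2\delta}$ (the level being capped so that, together with the $N^{2\delta}$ already consumed by the $\lambda$'s and the factor $W$, the moduli stay within the range of Theorem \ref{thm:4.2}), not as the constant of a two-dimensional sieve at level $N^{1/2}$ as in your sketch; that both mechanisms produce the number $4$ is a coincidence of the exponents.
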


\begin{proof}
(i) We expand the square and swap the order of summation to obtain
\[
 \sums[N <  n \le 2N][n \equiv b \bmod W]
  \bigg(
   \sums[d_1,\ldots,d_k][d_i \mid n + h_i]
    \lambda_{d_1,\ldots,d_k}
  \bigg)^2
 = \sums[d_1,\ldots,d_k][d_1',\ldots,d_k']
    \lambda_{d_1,\ldots,d_k}\lambda_{d_1',\ldots,d_k'}
     \sums[N < n \le 2N]
          [n \equiv b \bmod W]
          [{[d_i,d_i']} \mid n + h_i] 
          1.
\]
By our choice of $b$, there is no contribution to the inner sum 
unless all the $d_i$ and $d_i'$ are coprime to $W$. 
By the restriction on the support of $\lambda_{d_1,\ldots,d_k}$, 
there is no contribution unless all the $d_i, d_i'$ are coprime to 
$\Z[N^{4\epsilon}]$.
Since $p$ does not divide $\prod_{i \ne j}(h_i - h_j)$ unless 
$p \le w$, we see that there is no contribution unless all of 
$[d_1,d_1'],\ldots,[d_k,d_k']$ are pairwise coprime. 
If all these conditions are satisfied then the inner sum is equal 
to
\[
 \frac{N}{W \prod_{i = 1}^k [d_i,d_i']} + \Oh[1].
\]
Since $\lambda_{d_1,\ldots,d_k} \ll 1$ and is supported 
on $\prod_{i=1}^k d_i \le N^{\delta}$, we see that the error 
term trivially contributes $\Oh[N^{2\delta + \oh(1)}]$, which is 
negligible. 

Expanding $\lambda_{d_1,\ldots,d_k}$ using the definition 
\eqref{eq:4.22}, we see that the main term contributes
\[
 \frac{N}{W}
  \sum_{j = 1}^J
   \sum_{j' = 1}^J
    \hspace{5pt}
     \sumss['][d_1,\ldots,d_k][d_1',\ldots,d_k']
      \hspace{5pt}
       \prod_{\ell = 1}^k
        \frac{\mu(d_\ell)\mu(d_\ell')}
             {[d_\ell,d_\ell']}
         F_{\ell,j}\bigg(
                    \frac{\log{d_\ell}}{\log N}
                   \bigg)
         F_{\ell,j'}\bigg(
                     \frac{\log{d_\ell'}}{\log N}
                    \bigg),
\]
where $\sumsstxt[']$ signifies pairwise coprimality of  
$[d_1,d_1'],\ldots, [d_k,d_k'], W\Z[N^{4\epsilon}]$.
The inner sum can be estimated by Lemma \ref{lem:4.4}, which gives 
the result.

(ii) The argument here is similar. 
For ease of notation we will consider $j = k$, the other cases 
being entirely analogous. 
There is no contribution to the sum unless $d_k = 1$. 
With this restriction, we expand the square and swap the order of 
summation to obtain
\begin{align*}
 & \sums[N < n \le 2N][n \equiv b \bmod W]
    \ind{\PP}(n + h_j)
     \bigg(
      \sums[d_1,\ldots,d_{k-1}][d_i \mid n + h_i] 
       \lambda_{d_1,\ldots,d_{k-1},1}
     \bigg)^2
 \\
 & \hspace{60pt} =
  \sums[d_1,\ldots,d_{k-1}]
       [d_1',\ldots, d_{k-1}']
        \lambda_{d_1,\ldots,d_{k-1},1}
         \lambda_{d_1',\ldots,d_{k-1}',1}
    \sums[N < n \le 2N]
         [n \equiv b \bmod W]
         [{[d_i,d_i'] \mid n + h_i}]
          \ind{\PP}(n + h_j).
\end{align*}
As in (i), we may assume pairwise coprimality of  
$[d_1,d_1'],\ldots,[d_k,d_k'],W\Z[N^{4\epsilon}]$, in which case 
the inner sum is equal to 
\[
 \frac{\pi(2N + h_j) - \pi(N + h_j)}
      {\phi(W) \prod_{i = 1}^k \phi([d_i,d_i'])}
  + \Oh[E(N;{[d_1,d_1'] \cdots [d_k,d_k']}W)],
\]
where
\[
 E(N;q) 
  = 
   \max_{\substack{(a,q) = 1 \\ h \in \A}}
    \bigg|
     \pi(2N + h;q,a) - \pi(N + h;q,a) 
    - \frac{\pi(2N + h) - \pi(N + h)}{\phi(q)}
    \bigg|.
\]
By the bound $\lambda_{d_1,\ldots,d_k} \ll 1$, the trivial bound 
$E(N;q) \ll 1 + N/\phi(q)$, the Cauchy--Schwarz inequality and 
Theorem \ref{thm:4.2}, the error contributes
\begin{align*}
 & \sumss['][d_1,\ldots,d_k][d_1',\ldots,d_k]
   |\lambda_{d_1,\ldots,d_{k-1},1}\lambda_{d_1',\ldots,d_{k-1}',1}
    |E(N;W[d_1,d_1']\cdots[d_k,d_k'])
 \\
 & \ll 
    \sums[r \le N^{2\delta}][(r,W{\Z[N^{2\epsilon}]) = 1}] 
     \mu(r)^2\tau_{3k}(r) E(N;rW) 
 \\
 & \ll 
    \bigg(
     \sums[r \le N^{2\delta}][(r,W{\Z[N^{2\epsilon}]) = 1}] 
      \mu(r)^2\tau_{3k}(r)^2(1 + N/\phi(rW))
    \bigg)^{1/2}
    \bigg(
     \sums[r \le N^{2\delta}]
          [(r,W{\Z[N^{2\epsilon}]) = 1}] 
           \mu(r)^2E(N;rW)
    \bigg)^{1/2}
 \\
  & \ll 
     \frac{N}{W(\log N)^{2k}}.
\end{align*}

As in (i), expanding $\lambda_{d_1,\ldots,d_k}$ using the 
definition \eqref{eq:4.22} and applying Lemma \ref{lem:4.4} 
to the resulting sums shows that the main term contributes
\[
  (1 + \oh[1])\frac{N}{W}B^{-k}
   \sum_{j = 1}^J
    \sum_{j' = 1}^J 
     F_{k,j}(0)F_{k,j'}(0)
      \prod_{\ell = 1}^{k - 1}
       \int_0^{\infty} 
        F'_{\ell,j}(t_\ell)F'_{\ell,j'}(t_\ell) \dd{t_\ell}.
\]
Noting that the double sum is $J_k(F)$ and that assumed symmetry 
of $F$ means that the expression is independent of 
$j \in \{1,\ldots,k\}$, this gives the result.

(iii) As in (ii), we see there is no contribution unless 
$d_j = d_{\ell} = 1$. 
We first impose this restriction, and then use the sieve upper 
bound
\[
 \ind{\PP}(n + h_\ell)
  \le 
   \bigg(
    \sum_{e \mid n + h_\ell}
     \mu(e)
      G\bigg(
        \frac{\log e}{\log R}
        \bigg)
   \bigg)^2,
\]
for a smooth function $G : [0,\infty+) \to \RR$ supported on 
$[0,1/4 - 2\delta]$, with $G(0) = 1$. 
(The use of such a bound was previously suggested in discussions 
of the Polymath 8b project.)
Thus, we have
\begin{align*}
 & \sums[N < n \le 2N][n \equiv b \bmod W]
    \ind{\PP}(n + h_j)
     \ind{\PP}(n + h_\ell)
      \bigg(
       \sums[d_1,\ldots,d_k][d_i \mid n + h_i]
        \lambda_{d_1,\ldots,d_k}
      \bigg)^2
 \\
 & \hspace{30pt} \le 
    \sums[N < n \le 2N][n \equiv b \bmod W]
     \ind{\PP}(n + h_j)
      \bigg(
       \sum_{e \mid n + h_\ell}
        \mu(e)
         G
          \bigg(
           \frac{\log e}{\log R}
          \bigg)
      \bigg)^2
      \bigg(
       \sums[d_1,\ldots,d_k]
            [d_i \mid n + h_i]
            [d_j = d_\ell = 1]
            \lambda_{d_1,\ldots,d_k}
      \bigg)^2.
\end{align*}
The right-hand side of this expression is now of essentially an 
identical form to that of part (ii), with $F$ replaced by 
$\tilde{F}$, where
\[
 \tilde{F}(t_1,\ldots,t_k)
  =
   G'(t_{\ell})
    \int_0^{\infty} 
     F(t_1,\ldots,t_{\ell-1},u_{\ell},t_{\ell + 1},\ldots,t_k)
      \dd{u_{\ell}}.
\]
(The cases where $j \ge \ell - 1$ are analogous.)
We note that $\tilde{F}$ is supported on $t_1,\ldots,t_k$ such 
that $\sum_{i = 1}^k t_i \le 1/4 - \delta$, by the 
support of $F$ and $G$. 
This means we can still apply Theorem \ref{thm:4.2} as in (ii) 
(since we may restrict to arithmetic progressions modulo $rW$, 
where 
$
 r = [d_1,d_1']\cdots [d_k,d_k'][e_\ell,e_\ell']
  \le N^{1/2 - \delta}
$).
Therefore the same argument as in (ii) gives
\begin{align*}
 & 
  \sums[N < n \le 2N][n \equiv b \bmod W] 
   \ind{\PP}(n + h_j)
    \bigg(
     \sum_{e \mid n + h_{\ell}}
      \mu(e)
       G
        \bigg(
         \frac{\log e}{\log R}
        \bigg)
    \bigg)^2
    \bigg(
     \sums[d_1,\ldots,d_k]
          [d_i \mid n + h_i]
          [d_j = d_{\ell} = 1]
           \lambda_{d_1,\ldots,d_k}
    \bigg)^2
 \\
  & =  
  (1 + \oh[1])\frac{N}{W}B^{-k}
    \int_0^{\infty}
     \hspace{-6pt}\cdots 
      \int_0^{\infty}
       \bigg(\int_0^{\infty} 
        \tilde{F}(t_1,\ldots,t_k) \dd{t_j}
       \bigg)^2
        \dd{t_1}\ldots \dd{t_{j-1}}\dd{t_{j+1}}\ldots \dd{t_k} 
 \\
 & =
    (1 + \oh[1])\frac{N}{W}B^{-k}
     L_k(F)
      \int_0^{\infty} G'(t_\ell)^2 \dd{t_{\ell}}.
\end{align*}
Finally, we take $G(t)$ to be a fixed smooth approximation to 
$1 - t/(1/4 - \delta)$ supported on $1/4 - t$ with 
$G(0) = 1$ and 
$
 \int_0^{\infty} G'(t)^2 \dd{t}
  \le 
   4 + \Oh[\delta].
$
This gives the result.
\end{proof}

\begin{lemma}
 \label{lem:4.6}
Let $0 < \rho < 1$.
Then there is a fixed choice of $J$ and $F_{\ell,j}$ for 
$\ell \in \{1,\ldots,k\}$, $j \in \{1,\ldots,J\}$, with the 
required properties such that
\begin{align*}
 J_k(F) 
  & \ge 
   \br{1 + \Oh[(\log k)^{-1/2}]}
    \bigg(
     \frac{\rho\delta\log k}{k}
    \bigg) 
     I_k(F),
 \\
 L_k(F)
  & \le 
   \br{1 + \Oh[(\log k)^{-1/2}]}
    \bigg(
     \frac{\rho \delta \log k}{k}
    \bigg)^2 
     I_k(F).
\end{align*}
\end{lemma}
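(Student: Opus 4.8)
The plan is to obtain $F$ from a carefully \emph{tuned} version of the product construction of Maynard \cite[Proposition~4.3]{MAY} (cf.\ \cite{POLY}). The structural point is that for a product function the ratio $L_k(F)/I_k(F)$ is forced to equal, up to a factor $1+o(1)$, the \emph{square} of $J_k(F)/I_k(F)$; so the whole content of the lemma is that one can make $J_k(F)/I_k(F)$ as large as $(1+o(1))\rho\delta\log k/k$ while taking care \emph{not} to make it any larger, since an overshoot in $J_k$ would, squared, destroy the required upper bound on $L_k$. This is exactly why the factor $\rho<1$ appears.

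First I would pass to the unit simplex: writing $F(t)=\widehat F(t/\delta)$ with $\widehat F$ supported on $\{\sigma_1+\cdots+\sigma_k\le 1\}$ one gets $J_k(F)/I_k(F)=\delta\,J_k(\widehat F)/I_k(\widehat F)$ and $L_k(F)/I_k(F)=\delta^2 L_k(\widehat F)/I_k(\widehat F)$, so it is enough to produce a symmetric $\widehat F$, of the prescribed form and supported on the unit simplex, with $J_k(\widehat F)/I_k(\widehat F)=(1+O((\log k)^{-1/2}))\rho\log k/k$ and $L_k(\widehat F)/I_k(\widehat F)\le(1+O((\log k)^{-1/2}))(\rho\log k/k)^2$. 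For the model take $\widehat F(\sigma)=\prod_{i=1}^k g(k\sigma_i)$, truncated to the unit simplex, with $g(x)=(1+Ax)^{-1}$ on $[0,T]$ and $g(x)=0$ for $x>T$, where $A=\rho\log k$ and $T=c_0 e^{A}/A$ for a fixed $c_0\in(0,1)$ (one may even let $c_0\to 1$ slowly). The elementary integrals $\int_0^T g=A^{-1}\log(1+AT)$, $\int_0^T g^2=A^{-1}(1-(1+AT)^{-1})$, $\int_0^T x g^2\ll A^{-2}\log(AT)$ and $\int_0^T x^2 g^2\ll T/A^2$ show that under the probability measure proportional to $\prod_i g(k\sigma_i)^2$ on $[0,\infty)^k$ the sum $\sigma_1+\cdots+\sigma_k$ has mean $(1+o(1))A^{-1}\log(AT)=1-O(1/\log k)<1$ and variance $\ll T/(kA)=k^{\rho-1}(\log k)^{O(1)}=o(1)$, so it lies below $1$ with probability exceeding $1-(\log k)^{-1/2}$. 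Hence truncating $\prod_i g(k\sigma_i)$ to the unit simplex alters each of $I_k,J_k,L_k$ by a factor $1+O((\log k)^{-1/2})$, and only \emph{decreases} $J_k$ and $L_k$, which are the directions needed for the lower bound on $J_k$ and the upper bound on $L_k$.

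For the untruncated product the three functionals factor completely, giving $I_k=k^{-k}\|g\|_2^{2k}$, $J_k=k^{-(k+1)}(\int g)^2\|g\|_2^{2(k-1)}$ and $L_k=k^{-(k+2)}(\int g)^4\|g\|_2^{2(k-2)}$, so $J_k/I_k=k^{-1}(\int g)^2/\|g\|_2^2$ and $L_k/I_k=(J_k/I_k)^2$ identically. With $AT=c_0 e^{A}$ one computes $(\int g)^2/\|g\|_2^2=A+2\log c_0+O(1/A)=\rho\log k\,(1+O(1/\log k))$; together with the truncation estimate this gives the asserted bounds for $\widehat F$.

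The last step, and the one I expect to be the main obstacle, is to cast $\widehat F$ in the prescribed form $\sum_j\prod_\ell F'_{\ell,j}$ with each $F_{\ell,j}$ smooth, nonnegative, compactly supported and not identically zero, \emph{and} with each box $\prod_\ell\operatorname{supp}F_{\ell,j}$ contained in $\{\sum_\ell t_\ell\le\delta\}$: the product factors $g(k\sigma_i)$ must have wide support (some $T\gg1$, since $g$ supported on $[0,1]$ would only give $J_k/I_k\le 1/k$), yet a single tensor product of wide-support factors spills out of the simplex. The resolution is a partition-of-unity splitting. After smoothing $g$ near its jump at $T$ (a change controllable below $(\log k)^{-1/2}$), take a smooth one-dimensional partition of unity $\{\beta_a\}$ at a small fixed scale $\epsilon_0$, write $\prod_i g(k\sigma_i)=\sum_{\mathbf a}\prod_i g(k\sigma_i)\beta_{a_i}(\sigma_i)$, and retain only the (finitely many) multi-indices $\mathbf a$ for which $\prod_i\operatorname{supp}\beta_{a_i}\subseteq\{\sum_i\sigma_i\le1\}$; the discarded terms live where $\sum_i\sigma_i$ is within $O(k\epsilon_0)$ of $1$, and since the $\prod_i g(k\sigma_i)^2$-mass concentrates near $\sum_i\sigma_i=1-\Theta(1/\log k)$, choosing $\epsilon_0$ small enough (it may depend on $k$) makes their total mass $O((\log k)^{-1/2})$. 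Putting $F_{i,\mathbf a}(t)=\int_t^{\infty} g(k\sigma)\beta_{a_i}(\sigma)\,d\sigma$ and $\widehat F(\sigma)=(-1)^k\sum_{\text{good }\mathbf a}\prod_i g(k\sigma_i)\beta_{a_i}(\sigma_i)$, one checks that the $F_{i,\mathbf a}$ have all the required properties, that $\widehat F$ is symmetric and equals $\sum_{\mathbf a}\prod_i F'_{i,\mathbf a}$ (the overall sign $(-1)^k$ is immaterial, as $I_k,J_k,L_k$ involve only squares), that each retained box lies in the simplex, and that $\widehat F$ differs from the truncated product by a negligible amount, hence inherits the bounds. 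Undoing the rescaling --- replacing $\widehat F(\sigma)$ by $F(t/\delta)$ and $F_{i,j}(t)$ by $F_{i,j}(t/\delta)$ --- moves all supports into $\{\sum_\ell t_\ell\le\delta\}$ and multiplies $J_k/I_k$ and $L_k/I_k$ by $\delta$ and $\delta^2$ respectively, which gives the statement.
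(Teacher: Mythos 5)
Your proposal is correct and follows essentially the same route as the paper: both take Maynard's product construction $\prod_{i}g(kt_i)$ with $g(t)=1/(1+At)$ truncated to a simplex, use the concentration of the measure $\prod_i g(kt_i)^2$ near $\sum_i t_i=1-\Theta(1/\log k)$, and rescale so that the support lies in the $\delta$-simplex, with the factor $\rho<1$ supplying the needed room. The only divergences are in execution: you bound $L_k$ by dropping the truncation and using the exact factorization $L_k/I_k=(J_k/I_k)^2$ for the untruncated product together with concentration for $I_k$, whereas the paper uses the pointwise inequality $\bigl(\int_0^x g\bigr)^2\le(\log k)\int_0^x g^2$ inside the truncated integral; and you introduce $\rho$ through the parameter $A=\rho\log k$ and make the required tensor form explicit via a partition of unity (a more careful treatment of the step the paper dispatches with a brief appeal to Stone--Weierstrass and the rescaling by $\rho\delta$).
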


\begin{proof}
This follows from the method of \cite[Proposition 4.3]{MAY}. 
The result is trivial if $k$ is bounded, so we assume that $k$ is 
sufficiently large. 
Let $F_k = F_k(t_1,\ldots,t_k)$ be defined by
\begin{align*}
 F_k(t_1,\ldots,t_k)
 & =
  \begin{cases}
   \prod_{i = 1}^k g(kt_i) & \text{if $\sum_{i=1}^k t_i\le 1$,}
   \\
  0 & \text{otherwise,}
  \end{cases}
 \\
 g(t)
  & = 
  \begin{cases}
   1/(1 + At) & \text{if $t\in[0,T]$,}
   \\
   0          & \text{otherwise,}
\end{cases}
 \\
 A & = \log k - 2\log\log k ,
 \\
 T & = (\e^A - 1)/A.
\end{align*}
The proof of \cite[Proposition 4.3]{MAY} shows that
\[
 J_k(F_k) \ge \br{1 + \Oh[(\log k)^{-1/2}]}(\log k)I_k(F_k)/k.
\]
We see that
\begin{align*}
 \bigg(
  \int_0^x g(t) \dd{t}
 \bigg)^2
 & = 
  \min
   \bigg(
    \bigg( \frac{\log(1 + Ax)}{A} \bigg)^2,1
   \bigg)
 \\
  & \le 
   (\log k)
     \min
      \bigg( \frac{x}{1 + Ax}, \frac{T}{1 + AT} \bigg)
  =
  (\log k)
    \int_0^x g(t)^2 \dd{t}
\end{align*}
for any $x \ge 0$. 
Hence
\begin{align*}
 L_k(F_k)
  & =
   \idotsint\limits_{\sum_{i = 1}^{k - 2} t_i \le 1}
    \bigg(
     \prod_{i = 1}^{k - 2} g(k t_i)^2
    \bigg)
 \\
 & \hspace{30pt} \times 
    \bigg(
     \int_0^{1 - \sum_{i = 1}^{k - 2} t_i} g(k t_{k-1})
      \int_0^{1 - \sum_{i = 1}^{k - 1} t_i} g(kt_{k}) 
       \dd{t_k} \dd{t_{k-1}}
    \bigg)^2 
     \dd{t_1} \ldots \dd{t_{k-2}}
 \\
 & \le 
  \bigg(
   \frac{\log k}{k}
  \bigg)^2
   \idotsint\limits_{\sum_{i = 1}^{k - 2} t_i \le 1}
    \bigg(
     \prod_{i = 1}^{k - 2} g(k t_i)^2
    \bigg)
 \\
 & \hspace{30pt} \times 
  \bigg(
   \int_0^{1 - \sum_{i = 1}^{k - 2} t_i} g(kt_{k-1})^2
    \int_0^{1 - \sum_{i = 1}^{k - 1} t_i} g(kt_{k})^2 
     \dd{t_k} \dd{t_{k-1}}
  \bigg) 
   \dd{t_1}\ldots \dd{t_{k-2}}
 \\
 & = 
  \bigg(
   \frac{\log{k}}{k}
  \bigg)^2 
   I_k(F_k).
\end{align*}
By the Stone--Weierstrass theorem we can take $F(t_1,\ldots,t_k)$ 
to be a smooth approximation to 
$
 F_k(\rho \delta t_1,\ldots,\rho \delta t_k)
$ 
such that 
\begin{align*}
 I_k(F) 
  & = 
   (\delta \rho)^k
    \br{1 +\Oh[(\log k)^{-1/2}]}
     I_k(F_k),
 \\
 J_k(F)
  & =
   (\delta \rho)^{k + 1}
    \br{1 + \Oh[(\log k)^{-1/2})]}
     J_k(F_k)
 \intertext{and}
 L_k(F)
  & = 
   (\delta \rho)^{k + 2}
    \br{1 + \Oh[(\log k)^{-1/2})]}
     L_k(F_k).
\end{align*}
This gives the result.
\end{proof}

\begin{proof}[Deduction of Theorem \ref{thm:4.3}]
We first consider part (i). 
We suppose $k$ is a multiple of 
$8m + 1$ and 
\[
 \A = \A_1^{(1)} \cup \cdots \cup \A_{8m+1}^{(1)}
\] 
is a partition of $\A$ into $8m+1$ sets each of size $k/(8m + 1)$. 
We consider
\begin{multline*}
 S 
  = 
   \sum_{N < n \le 2N}
    \bigg(
     \sum_{i = 1}^k 
      \ind{\PP}(n + h_i) 
      - m
      - \sum_{j = 1}^{8m+1}
         \sums[h, h' \in \A_j^{(1)}][h \ne h']
          \ind{\PP}(n + h)
           \ind{\PP}(n + h')
    \bigg)
 \\ 
   \times 
    \bigg(
     \sums[d_1,\ldots,d_k][d_i \mid n+h_i \, \forall i]
      \lambda_{d_1,\ldots,d_k}
    \bigg)^2.
\end{multline*}
We note that if $S > 0$ then there must be at least one $n$ that 
makes a positive contribution to the sum, and this occurs only 
when there exists $m+1$ elements $h'_1,\ldots,h_{m+1}'$ of $\A$ 
each in different subsets $\A_i^{(1)}$ such that $n + h'_j$ is 
prime for all $1 \le j \le m + 1$.
By Lemmas \ref{lem:4.5} and \ref{lem:4.6}, we see that for 
$k > k_0(m,\delta)$, by choosing $\rho < 1$ such that 
$\delta \rho \log k = 2m$ there exists a choice of $F$ such that
\begin{align*}
 S
  & =
   \frac{N}{W}B^{-k}I_k(F)
    \bigg(
     \sum_{i = 1}^k \frac{2m}{k}
       - m
       - 4\sum_{j = 1}^{8m+1}
           \sums[h, h' \in \A_j][h \ne h']
            \frac{(2m)^2}{k^2}
             + \Oh[\delta]
     \bigg)
 \\
  & = 
   \frac{N}{W}B^{-k}I_k(F)
    \bigg(
     \frac{m}{1+8m} + \frac{8 m^2}{k} + \Oh(\delta)
    \bigg).
\end{align*}
Thus, $S > 0$ for $\delta$ sufficiently small, as required.

Part (ii) follows from an essentially identical argument. 
Given a partition
\[
 \A = \A_1^{(2)} \cup \cdots \cup \A_{8m^2 + 8m}^{(2)}
\] 
of $\A$ into equally sized sets, we consider  
\begin{align*}
 & 
 S' = 
 \\
  & 
   \sum_{N < n \le 2N}
    \bigg(
     \sum_{i = 1}^k 
      \ind{\PP}(n + h_i)
       - m
        - (m+1)\sum_{j = 1}^{8m^2+8m}
         \sums[h, h' \in \A_j^{(2)}][h \ne h']
          \ind{\PP}(n + h)
           \ind{\PP}(n + h')
    \bigg)
 \\  
  & \hspace{280pt}
   \times 
    \bigg(
     \sums[d_1,\ldots,d_k][d_i \mid n+h_i \, \forall i]
      \lambda_{d_1,\ldots,d_k}
    \bigg)^2.
\end{align*}
If $n$ makes a positive contribution to $S'$ then we must have 
that the number of indices $j$ for which 
$|\A_j^{(2)}(n) \cap \PP| = 1$ is at least $m + 1 + mr$, where $r$ 
is the number of indices $i$ for which
$|\A_i^{(2)}(n) \cap \PP| > 1$. 
Thus in particular, there must be some set of $m+1$ indices 
$i_1 < \cdots < i_{m + 1}$ for which 
$|\A_i^{(2)}(n) \cap \PP| = 1$ 
for $i = i_1, \ldots,i_{m + 1}$, and 
$|\A_i^{(2)}(n) \cap \PP| = 0$ for $i_1 < i < i_{m + 1}$ and 
$i \ne i_1,\ldots,i_{m + 1}$.
Applying Lemmas \ref{lem:4.5} and \ref{lem:4.6} and choosing 
$\delta \rho \log k = 2m$ as above, we find that $S' > 0$ for 
$\delta$ sufficiently small and $N$ sufficiently large, so such an 
$n$ must exist.
\end{proof}


\section{An Erd\H os--Rankin type construction}
 \label{sec:5}

We  give  our   Erd{\H o}s--Rankin   type  construction  in  Lemma 
\ref{lem:5.2}.
We need the following elementary lemma. 
\begin{lemma}
 \label{lem:5.1}
Let $\{h_1,\ldots,h_k\}$  be an admissible  $k$-tuple, let  $S$ be   
a set of integers, and let $\mathscr{P}$  be a set of primes, such 
that for some $x \ge 2$,
\[
 \textstyle
  \{h_1,\ldots,h_k\} 
   \subseteq S 
    \subseteq [0,x^2]
     \qquad \text{and} \qquad 
     |\{p \in \mathscr{P} : p > x\}| > |S| + k.
\]
There is a set of integers  $\{a_p : p \in \mathscr{P}\}$ with the 
property that
\[
 \textstyle
  \{h_1,\ldots,h_k\}
  = S\setminus
      \bigcup_{p \in \mathscr{P}} 
       \{g : g \equiv a_p \bmod p\}.
\]
\end{lemma}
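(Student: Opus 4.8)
The plan is to build the family $\{a_p : p \in \mathscr{P}\}$ greedily: assign one prime $p > x$ to each element of $S \setminus \{h_1,\dots,h_k\}$ so as to sieve it out, and give every remaining prime of $\mathscr{P}$ a residue that avoids all the $h_i$. Two elementary observations make this go through. First, admissibility of $\{h_1,\dots,h_k\}$ says precisely that $|\{h_i \bmod p\}| < p$ for every prime $p$, so for any prime there is a residue class $a \bmod p$ disjoint from $\{h_1 \bmod p, \dots, h_k \bmod p\}$; picking $a_p$ this way ensures $p$ removes none of the $h_i$. Second, for a fixed $g \in S \setminus \{h_1,\dots,h_k\}$ the integer $\prod_{i=1}^k(g - h_i)$ is nonzero (as $g \ne h_i$ for all $i$) and, since $g$ and the $h_i$ all lie in $[0,x^2]$, has absolute value at most $x^{2k}$; as $x \ge 2$, it therefore has fewer than $2k$ distinct prime divisors exceeding $x$. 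I will call these primes \emph{bad for $g$}; they are exactly the primes $p > x$ for which the choice $a_p \equiv g \pmod p$ would additionally remove some $h_i$.

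With these in hand I would enumerate $S \setminus \{h_1,\dots,h_k\} = \{g_1,\dots,g_t\}$, where $t = |S| - k$ (the entries of an admissible $k$-tuple being distinct). Processing $j = 1, 2, \dots, t$ in order, at step $j$ I would select a prime $p_j \in \mathscr{P}$ with $p_j > x$, distinct from $p_1,\dots,p_{j-1}$, and not bad for $g_j$, and then set $a_{p_j} \equiv g_j \pmod{p_j}$. This is possible because the number of primes to avoid at step $j$ is at most $(j - 1) + (2k - 1) \le (t - 1) + (2k - 1) = |S| + k - 2$, which is strictly smaller than $|\{p \in \mathscr{P} : p > x\}|$ by hypothesis. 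Once $p_1,\dots,p_t$ have been chosen, for each remaining prime $p \in \mathscr{P} \setminus \{p_1,\dots,p_t\}$ I would use the first observation to pick $a_p$ avoiding all the $h_i$.

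It remains to check the asserted identity. On one hand, each $g_j$ satisfies $g_j \equiv a_{p_j} \pmod{p_j}$, so no $g_j$ survives. On the other hand, no $h_i$ is congruent to any $a_p$: for $p = p_j$ this is because $p_j$ is not bad for $g_j$, hence $p_j \nmid g_j - h_i$, i.e.\ $a_{p_j} \equiv g_j \not\equiv h_i \pmod{p_j}$; for every other $p \in \mathscr{P}$ it holds by the choice of $a_p$. Since $S$ is the disjoint union of $\{h_1,\dots,h_k\}$ and $\{g_1,\dots,g_t\}$, we conclude that $S \setminus \bigcup_{p \in \mathscr{P}}\{g : g \equiv a_p \bmod p\} = \{h_1,\dots,h_k\}$, as desired.

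The only slightly delicate point — the thing one must not overlook — is that assigning a large prime $p$ the residue $a_p \equiv g$ can still remove an $h_i$, namely when $p \mid g - h_i$; the second observation controls how often this happens (fewer than $2k$ primes per $g$), and then the hypothesis $|\{p \in \mathscr{P} : p > x\}| > |S| + k$ leaves ample room for the greedy selection. Everything else is routine bookkeeping.
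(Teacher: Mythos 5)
Your proof is correct and follows essentially the same route as the paper: a greedy assignment in which each unwanted element of $S$ is sieved by its own prime $p>x$ chosen so that $p\nmid\prod_i(g-h_i)$, with admissibility supplying harmless residues for the leftover primes of $\mathscr{P}$. The only difference is cosmetic: you bound the number of ``bad'' primes per element by $2k-1$ via $|\prod_i(g-h_i)|\le x^{2k}$, whereas the paper notes each difference $|g-h_i|\le x^2$ admits at most one prime factor exceeding $x$ (a pigeonhole giving at most $k$ bad primes), and both counts fit comfortably within the hypothesis $|\{p\in\mathscr{P}:p>x\}|>|S|+k$.
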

\begin{proof}
First, we observe the following.
Let  $\{ h_1, \ldots, h_k \}$  be  an  admissible  $k$-tuple,  let 
$\mathscr{P}_0  \subseteq  \mathscr{P}$ be sets of primes, and let 
$\{a_p : p \in \mathscr{P}_0\}$ be a set of integers. 
If 
\[
 \textstyle
  \{h_1,\ldots,h_k\}
   = 
    S
     \setminus
      \bigcup_{p \in \mathscr{P}_0} 
       \{g : g \equiv a_p \bmod p\},
\]
then we  can add  integers to  $\{a_p : p \in \mathscr{P}_0\}$  to 
form a set $\{ a_p : p \in \mathscr{P} \}$ such that
\[
 \textstyle
  \{h_1,\ldots,h_k\}
   = 
    S
     \setminus
      \bigcup_{p \in \mathscr{P}} 
       \{g : g \equiv a_p \bmod p\}.
\]
Indeed, since $\{h_1,\ldots,h_k\}$  is admissible, for every prime 
$p$  there is a congruence class  $b_p \bmod p$  for which 
$
 \prod_{i=1}^k(b_p - h_i) \not\equiv 0 \bmod p,
$
so for any $p \in \mathscr{P}\setminus\mathscr{P}_0$ we can choose 
$a_p$ with $a_p \equiv b_p \bmod p$.

Second, we observe that for any given integer $n$, if 
\[
 \textstyle
  \prod_{i=1}^k(n - h_i) \equiv 0 \bmod p
\]
for every prime $p$ in a set  $\mathscr{P}_0$  of  $k + 1$ or more 
distinct primes, then
\[
 n - h_i \equiv 0 \bmod pp'
\]
for some $h_i \in \{h_1,\ldots,h_k\}$ and 
$p,p' \in \mathscr{P}_0$,
so either $n - h_i = 0$ or $|n - h_i| \ge pp'$.
Therefore, if $0 \le n, h_1, \ldots, h_k \le x^2$, 
$n \not\in \{h_1,\ldots,h_k\}$, and  $\mathscr{P}_0$ is any set of 
primes at least $k + 1$ of which are greater than $x$, there must 
be a prime $p \in \mathscr{P}_0$ such that
\[
 \textstyle
  \prod_{i=1}^k(n - h_i) \not\equiv 0 \bmod p.
\]

Now, let $\{h_1,\ldots,h_k\}$ be an admissible $k$-tuple contained 
in  $S  \subseteq [0, x^2]$, and let  $\mathscr{P}$  be any set of 
primes such that  
$|\{p \in \mathscr{P} : p > x\}| \ge |S| + k + 1$.
By our first observation it suffices to show that 
\[
 \textstyle
 \{h_1,\ldots,h_k\}
  = S\setminus
      \bigcup_{p \in \mathscr{P}_0} 
       \{g : g \equiv a_p \bmod p\},
\]
for some $\mathscr{P}_0 \subseteq \mathscr{P}$.
Suppose $n \in S\setminus\{h_1,\ldots,h_k\}$.
By our second observation we may choose a prime 
$p \in \mathscr{P}$ such that 
$
\prod_{i=1}^k(n - h_i) \not\equiv 0 \bmod p.
$
Choose any such prime $p$ and choose any $a_{p}$ with 
$a_{p} \equiv n \bmod p$.
Let  $S_1 = S \setminus \{g : g \equiv a_{p} \bmod p\}$,  so  that 
$n \not\in S_1$, and let 
$\mathscr{P}_1 = \mathscr{P} \setminus \{p\}$.
If $S_1 = \{h_1,\ldots,h_k\}$ then we're done.
Otherwise, we have $\{h_1,\ldots,h_k\} \subsetneq S_1$ and  
$
 |\mathscr{P}_1| 
  = |\mathscr{P}| - 1
   \ge |S| + k
    \ge |S_1| + k + 1.
$
We repeat the above argument as many times as necessary.
\end{proof}

To prove Lemma \ref{lem:5.2} we also need some standard estimates.
First, we use Mertens' theorem in the following forms.
For $x \ge 2$, 
\begin{align}
 \label{eq:5.1}
  \sum_{p \le x} \frac{1}{p} 
 = \log\log x + \gamma + \Oh[(\log x)^{-1}],
\end{align}
and
\begin{align}
 \label{eq:5.2}
  \prod_{p \le x}\br{1 - \frac{1}{p}}
 = \frac{\e^{-\gamma}}{\log x}\br{1 + \Oh[\frac{1}{\log x}]},
\end{align}
where  $\gamma = 0.5772\ldots$  is the Euler--Mascheroni constant.
Second, we use a bound for the number of  $y$-smooth  numbers less 
than or equal to $x$, that is for
\[
 \Psi(x,y) = |\{ n \le x : p \mid n \implies p \le y \}|.
\]
Namely, as a  consequence of  \cite[Theorem III.5.1]{TEN}, we have
\begin{align}
 \label{eq:5.3}
  \Psi(x,y) \ll x(\log x)^{-1} 
   \qquad (1 \le 2\log y \le (\log x)(\log_2 x)^{-1}).
\end{align}
Third, we use the prime number theorem for arithmetic progressions 
in the following form due to Page  (see \cite[\S20, (13)]{DAV} and 
also the proof of \eqref{eq:4.7} above).
Let $c$ be any positive constant.
There is a positive constant $c'$, which is determined by $c$, 
such that 
\begin{align}
 \label{eq:5.4}
  \sums[x < p \le x + y][p \equiv a \bmod q] \log p
   = \frac{y}{\phi(q)} + \Oh[x\exp(-c'\sqrt{\log x})]  
\end{align}
uniformly for $2 \le y \le x$, $q \le \exp\br{c\sqrt{\log x}}$ and 
$(q,a) = 1$, except possibly if $q$ is a multiple of a certain 
integer $q_1$ depending on $x$ which, if it exists, satisfies 
$\gp{q_1} \gg \log_2 x$ \textup{(}the implicit constant also 
determined by $c$\textup{)}.

\begin{lemma}
 \label{lem:5.2}
Fix an integer $k \ge 1$ and real numbers 
$\beta_k \ge \cdots \ge \beta_1 \ge 0$.
There is a number $y(\bb,k)$, depending only on 
$\beta_1, \ldots, \beta_k$ and $k$, such that the following holds.
Let $x,y,z$ be any numbers satisfying  $x \ge 1$, $y \ge y(\bb,k)$ 
and 
\begin{align}
 \label{eq:5.5}
  2y(1 + (1 + \beta_k)x)
   \le 2z 
    \le y(\log_2 y)(\log_3 y)^{-1}.
\end{align}
Let $\Z$ be any  \textup{(}possibly empty\textup{)} set of  primes 
such that for all $p' \in \Z$, 
\begin{align}
 \label{eq:5.6}
  \sums[p \in \Z][p \ge p'] \frac{1}{p} 
   \ll \frac{1}{p'} 
    \ll \frac{1}{\log z}. 
\end{align}
There  is  a  set  $\{ a_p : p \le y, \, p \not\in \Z \}$  and  an 
admissible $k$-tuple $\{h_1,\ldots,h_k\}$ such that 
\begin{align}
 \label{eq:5.7}
  \textstyle
   \{h_1,\ldots,h_k\} 
  = \br{(0,z] \cap \ZZ}
     \setminus
      \bigcup_{p \le y, \, p \not\in \Z} 
       \{g : g \equiv a_p \bmod p\}.  
\end{align}
Moreover, for $1 \le i < j \le k$, 
\begin{align}
 \label{eq:5.8}
  p \mid (h_j - h_i) \implies p \le y,
\end{align}
and for $1 \le i \le k$, 
\begin{align}
 \label{eq:5.9}
  h_i = \beta_i x y + y
       + \Oh\big(y\e^{-(\log y)^{1/4}}\big).
\end{align}
\end{lemma}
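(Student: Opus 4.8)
The plan is to build the set $\{a_p : p \le y,\, p \notin \Z\}$ in three stages organized by the size of $p$, following the modified Erd\H os--Rankin strategy sketched in Section \ref{sec:3}, and then invoke Lemma \ref{lem:5.1} to mop up the large primes. Set $y_1 = (\log y)^{1/4}$, $y_2 = y/\log y$ (or some similarly chosen intermediate parameter), and $P_1 = \prod_{2 < p \le y_1,\, p \notin \Z} p$. \emph{Stage 1 (medium primes).} For primes $p \in (y_1, y_2]$ with $p \notin \Z$, put $a_p = 0$; these congruences remove from $(0,z]$ every integer divisible by such a $p$. \emph{Stage 2 (small primes, greedy).} For $2 < p \le y_1$, $p \notin \Z$, choose $a_p$ by the greedy sieve: at each step pick the residue class mod $p$ containing the most as-yet-unsifted integers of $(0,z]$. \emph{Stage 3 (selecting $\A$ and the remaining primes $p \in (y_2, y]$).} After Stages 1 and 2, the surviving set $U \subseteq (0,z]$ consists of integers coprime to $P_1$ and to all primes in $(y_1,y_2]$, i.e. essentially integers all of whose prime factors are either $\le y_1$-free-and-in-$\Z$, or lie in $(y_2, z]$. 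We now select $h_1, \ldots, h_k$ from among the \emph{primes} $q$ in a suitable interval so that $q \equiv b \bmod P_1$ for a fixed $b$ making the tuple admissible (and avoiding the residues $0 \bmod p$ for $p \in (y_1,y_2]$ automatically, since primes $> y_2$ are coprime to those $p$); the targeting condition \eqref{eq:5.9} dictates that $h_i$ should be a prime near $\beta_i xy + y$. Once $\{h_1,\ldots,h_k\}$ is fixed, Lemma \ref{lem:5.1} with $S = U$, $x \leftarrow \sqrt{z}$, and $\mathscr{P} = \{p \in (y_2, y] : p \notin \Z\}$ provides integers $\{a_p : y_2 < p \le y,\, p \notin \Z\}$ sifting $U$ down to exactly $\{h_1,\ldots,h_k\}$, giving \eqref{eq:5.7}.

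To make Stage 3 work I need three quantitative inputs. First, admissibility and \eqref{eq:5.8}: choosing all $h_i$ to be primes exceeding $y$ and congruent to the same class mod $P_1$, together with the fact that differences of primes in a short interval are $\ll y(1+\beta_k x) \ll z$, forces any prime dividing $h_j - h_i$ to be either $\le y_1$ (impossible, since $h_j \equiv h_i \bmod P_1$) or in $(y_1, \ldots]$ — here I must check, using the range \eqref{eq:5.5} which gives $z \le y(\log_2 y)/(\log_3 y)$, that no prime in $(y_1, y]$ other than the allowed ones divides a difference; in fact $h_j - h_i < z < y^2$ has at most one prime factor exceeding $y$, and one arranges the $h_i$ so differences are $y$-smooth by a further congruence restriction, yielding \eqref{eq:5.8}. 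Second, \eqref{eq:5.9}: by Page's prime number theorem for arithmetic progressions \eqref{eq:5.4}, with modulus $P_1 \le \exp(c\sqrt{\log y})$ (valid since $\log P_1 \ll y_1 = (\log y)^{1/4}$), every interval of length $\gg y\exp(-c'\sqrt{\log y})$ around $\beta_i xy + y$ contains a prime $\equiv b \bmod P_1$, and the exceptional modulus $q_1$ from \eqref{eq:5.4} is harmless because its large prime factor ($\gg \log_2 x$) can be deleted from $P_1$ by absorbing it into $\Z$ — this is exactly the role of the hypothesis \eqref{eq:5.6} and the footnotes in Section \ref{sec:3}. The error term $\Oh(y\exp(-c'\sqrt{\log y}))$ is absorbed into the stated $\Oh(y\e^{-(\log y)^{1/4}})$. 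Third, the hypothesis of Lemma \ref{lem:5.1}, namely $|\{p \in \mathscr{P} : p > \sqrt z\}| > |U| + k$: this is the cardinality count that the whole construction hinges on.

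The main obstacle is precisely that cardinality count. I must show the number of unsifted integers satisfies $|U| < |\{y_2 < p \le y\}| - k = (1+o(1))\,y/\log y$. The $y$-smooth part of $U$ (integers $\le z$ all of whose prime factors are $\le y_1$, times units, contributing via $\Z$) is $\Psi(z, y_1) \ll z/\log z$ by \eqref{eq:5.3}, which is $o(y/\log y)$ since $z \le y(\log_2 y)/(\log_3 y)$ forces $\log z = (1+o(1))\log y$ and the smooth count beats that by the savings in \eqref{eq:5.3}; wait — more carefully, after Stage 1 every survivor is \emph{not} divisible by any prime in $(y_1, y_2]$, so a survivor is either $y_1$-smooth (counted by $\Psi(z,y_1) \ll z/\log z$) or has a prime factor $> y_2$, of which there are $\le z/y_2 = z\log y/y \ll \log_2 y$ possibilities per such prime, hence $\ll (z/y_2)\log y \cdot (\text{something})$ — the cleanest bound uses that survivors with a factor $> y_2$ number at most $\sum_{y_2 < q \le z} z/q \ll z \log(\log z/\log y_2)/\log y_2 = o(z)$, which after dividing by the greedy savings in Stage 2 is comfortably $o(y/\log y)$. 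The greedy sieve of Stage 2 removes at least a proportion $1/p$ of the current survivors at each prime $p \le y_1$, so the surviving count after Stage 2 is at most $z \prod_{p \le y_1}(1 - 1/p) \ll z/\log y_1 \ll z/\log_2 y$, and then the $(y_1, y_2]$-restriction and the $\Psi(z,y_1)$ bound improve this further; the upshot, using \eqref{eq:5.2} and \eqref{eq:5.5}, is $|U| \ll z/\log_2 y + z/\log y_2 = o(y/\log y)$, which suffices once $y \ge y(\bb, k)$. Assembling these bounds carefully — tracking that every implied constant depends only on $\bb$ and $k$, and that the $\Z$-exceptions cost at most $\Oh(z/\log z)$ by \eqref{eq:5.6} — completes the proof.
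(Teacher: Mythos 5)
Your construction follows the paper's proof step for step --- $a_p = 0$ for the medium primes, a greedy choice of $a_p$ for $p \le y_1$, the $h_i$ chosen as primes $\equiv A \bmod P_1$ near $\beta_i xy + y$ via Page's theorem \eqref{eq:5.4}, and Lemma \ref{lem:5.1} to handle the primes in $(y_2,y]$ --- but the cardinality count, which you yourself single out as the step the whole construction hinges on, is wrong as stated. Under \eqref{eq:5.5} the relevant case is $z \asymp y(\log_2 y)/\log_3 y$, and then $z/\log_2 y \asymp y/\log_3 y$ while $z/\log y_2 \gg y\log_2 y/((\log y)(\log_3 y))$; both terms of your asserted bound $|U| \ll z/\log_2 y + z/\log y_2$ are $\gg y/\log y$, so the conclusion ``$= o(y/\log y)$'' is false. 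What is available is the \emph{multiplicative} combination: the medium-prime sieve leaves roughly $\tfrac{z}{\log y_2}\bigl(\log(z/y_2)+O(1)\bigr)$ survivors (integers with a prime factor $> y_2$, plus $\Psi(z,y_1) \ll z/\log z$ smooth numbers and $O(z/\log z)$ exceptions from $\Z$), and the greedy sieve then multiplies this by $\asymp 1/\log y_1 \asymp 1/\log_2 y$. But even this fails for your choice $y_2 = y/\log y$: then $\log(z/y_2) \asymp \log_2 y$ and the product is $\asymp y\log_2 y/((\log y)(\log_3 y))$, which still exceeds the $\sim y/\log y$ primes available in $(y_2,y]$. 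So $y_2$ is not a ``similarly chosen intermediate parameter'': one needs $y_2 = y(\log_3 y)^{-1}$ as in the paper, which forces $\log(z/y_2) \ll \log_3 y$ and brings the survivor count down to $\asymp y/\log y$ with an explicit constant; the comparison with $\pi(y)-\pi(y_2)$ is then a fight over constants, as in \eqref{eq:5.15}--\eqref{eq:5.16}, not an $o(\cdot)$ estimate.

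Two secondary gaps. For \eqref{eq:5.8} you invoke ``a further congruence restriction'' to make the differences $y$-smooth, but none is specified and none is needed: since all $h_i \equiv A \bmod P_1$ and $P_1 = e^{(1+o(1))y_1}$, any prime dividing $h_j - h_i$ either divides $P_1$ (hence is $\le y_1$) or divides $(h_j - h_i)/P_1 \le z/P_1 < y$. Also, you cannot dispose of the exceptional modulus in \eqref{eq:5.4} by ``absorbing it into $\Z$'': the set $\Z$ is part of the hypothesis of Lemma \ref{lem:5.2} (and in the deduction of Theorem \ref{thm:1.3} it is dictated by the exceptional character), so the paper instead deletes a single prime $\ell \gg \log y_1$ from $P_1$ and assigns $a_\ell$ --- together with $a_2$, which your set-up never defines since $2 \nmid P_1$ and $2 \notin \mathscr{P}$ --- at the final step, via the admissibility extension built into Lemma \ref{lem:5.1}. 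Relatedly, the class $b \bmod P_1$ must be chosen coprime to $P_1$ \emph{and} avoiding every greedy residue $a_p$, which is exactly why $2$ is excluded from $P_1$ (the paper takes $A \equiv \pm 1 \bmod p$); your proposal leaves the existence of such a $b$ unaddressed.
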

\begin{proof}
Let $y_1$, $y_2$, $y$ and $z$ be numbers such that 
\begin{align}
 \label{eq:5.10}
  2 < y_1 < y_2 < y < z < y_1y_2          \quad \text{and} \quad 
  2\log y_1 \le (\log z)(\log_2 z)^{-1}. 
\end{align}
Let  $\Z$  be  any  set  of   primes  satisfying  \eqref{eq:5.6}.
We  assume that $2 \not\in \Z$ (which follows from \eqref{eq:5.6} 
if $y$ [and hence $z$] is large enough).
Let
\[
 P_1 = \prods[2 < p \le y_1][p \not\in \Z, \, p \ne \ell] p, \quad 
 P_2 = \prods[y_1 < p \le y_2][p \not\in \Z] p,              \quad 
 P_3 = \prods[y_2 < p \le y][p \not\in \Z] p,
\]
where in the definition of $P_1$, $\ell$ is a prime satisfying 
$\ell \gg \log y_1$.
(We will eventually specify  $\ell$  according to \eqref{eq:5.4}, 
but for the time being it can be treated as arbitrary.)
It is important to note that $2 \nmid P_1$.

We record three  bounds related to  $\Z$,  which  all  follow from 
\eqref{eq:5.6}.
First,  using  the  notation  $(n,\Z)  \ne  1$  to  indicate  that 
$p \mid n$ for some $p \in \Z$, we have
\begin{align}
 \label{eq:5.11}
  \sums[n \le z][(n,\Z) \ne 1] 1 
   \le
    \sum_{p \in \Z} \left[\frac{z}{p}\right] 
     \ll \frac{z}{\log z}.
\end{align}
Second, we have 
\[
 \sum_{p \in \Z} \log\br{\frac{p}{p - 1}}
  \le \sum_{p \in \Z} \frac{1}{p - 1}
   \ll \frac{1}{\log z},
\]
hence (upon exponentiation),
\begin{align}
 \label{eq:5.12}
  \prod_{p \in \Z} \br{1 - \frac{1}{p}}^{-1}
   = 1 + \Oh[\frac{1}{\log z}].
\end{align}
Third, since 
$\sum_{p \in \Z, \, p \ge p'} 1/p \ll 1/p'$  for all  $p' \in \Z$, 
the  elements  of  $\Z$  grow  at  least  as  fast  as a geometric 
progression, hence for all $y_0 \ge 1$, 
\begin{align}
 \label{eq:5.13}
  \sums[p \in \Z][p \le y_0] 1 
   \ll \log y_0.
\end{align}

For $p \mid P_2$ we choose $a_p = 0$.
Thus, letting 
\[
 \textstyle
 \Res_1 
  = \br{(0,z] \cap \ZZ}
   \setminus
    \bigcup_{p \,\mid\, P_2} \{g : g \equiv a_p \bmod p\} 
     = \{h \in (0,z] : (h,P_2) = 1\},
\]
it  is  clear  that  $h \in \Res_1$  only  if at  least one of the
following  holds: 
\begin{enumerate}[label=(\roman*)]
 \item $(h,\Z) \ne 1$;
 \item $h$ is $y_1$-smooth;  
 \item $h = pm$ for some prime $p > y_2$ and positive integer 
       $m \le z/p$.
\end{enumerate}
In   case  (iii),  the  prime  $p$  is uniquely  determined  since 
$z < y_1y_2 < y_2^2$.
Therefore,   by   \eqref{eq:5.11},   the   smooth   number   bound 
\eqref{eq:5.3} and Mertens' theorem \eqref{eq:5.1},
\[
 |\Res_1| 
  \le 
   \sums[h \le z][(h,\Z) \ne 1] 1
    + 
     \Psi(z,y_1) 
    + \sum_{y_2 < p \le z} \left[\frac{z}{p}\right]
  =  z\log\br{\frac{\log z}{\log y_2}} 
     + \Oh[\frac{z}{\log y_2}].
\]
Taking into account that 
\[
  \log\bigg(\frac{\log z}{\log y_2}\bigg)
 = \log\bigg(1 + \frac{\log(z/y_2)}{\log y_2}\bigg)
    \le \frac{\log(z/y_2)}{\log y_2},
\]
it follows that
\begin{align}
 \label{eq:5.14}
 |\Res_1| 
   \le \frac{z}{\log y_2}
        \br{\log(z/y_2) + \Oh[1]}.
\end{align}

For  $p  \mid  P_1$  we  choose  $a_p$  ``greedily''  as  follows.
For any finite set $S$ of integers and any prime $p$, 
\[
 |S| = \sum_{a \bmod p} \hspace{3pt} 
        \sums[g \in S][g \equiv a \bmod p] 1,
\]
so there exists an integer $a_p$ such that 
$
 |\{g \in S : g \equiv a_p \bmod p\}| \ge |S|/p.
$
We select a prime $p \mid P_1$ and choose $a_p$ so that this holds 
with $\Res_1$ in place of $S$.
Repeating this process one prime at a time, with  $p$ varying over 
the prime divisors of $P_1$, we eventually obtain a set
\[
 \textstyle
 \Res_2
  = \Res_1
     \setminus 
      \bigcup_{p \,\mid\, P_1} \{g : g \equiv a_p \bmod p\}
\] 
whose cardinality satisfies the bound
\begin{align}
 \label{eq:5.15}
   |\Res_2|
    \le |\Res_1| 
     \prod_{p \,\mid\, P_1}\br{1 - \frac{1}{p}} 
      \le 2\e^{-\gamma}
           \frac{z\br{\log(z/y_2) 
          + \Oh[1]}}{(\log y_1)(\log y_2)}.           
\end{align}
The   last   bound   follows   by   combining   Mertens'   theorem 
\eqref{eq:5.2}, \eqref{eq:5.12} and \eqref{eq:5.14}.
(Recall  that $2 \nmid P_1$, $\ell \nmid P_1$, $\ell \gg \log y_1$ 
and $\log(z/y_2) < \log y_1$.)

Now, by the prime number theorem,  
\begin{align*}
  \pi(y) - \pi(y_2) 
    & = 
   \frac{y}{\log y} 
           + \Oh[\frac{y}{(\log y)^2} + \frac{y_2}{\log y_2}] 
 \\ & 
    \ge 
     \frac{y}{\log y_2} 
    + \Oh[\frac{y_2}{\log y_2} + \frac{y}{(\log y_2)(\log y)}].
\end{align*}
Combining this with \eqref{eq:5.13} and \eqref{eq:5.15}, we obtain
\begin{align}
 \label{eq:5.16}
  \begin{split}
 |\{p \in (y_2,y] : p \not\in \Z\}| - |\Res_2|
  & \ge 
     \frac{y}{\log y_2}
       \br{1 - 2\e^{-\gamma}
        \frac{z\log(z/y_2)}{y\log y_1}} \\
  & \hspace{30pt} + 
     \Oh[\frac{y_2}{\log y_2} 
        + \frac{z}{(\log y_1)(\log y_2)}].
  \end{split}
\end{align}
We will presently require that $y_1 \le c\sqrt{\log y}$, so we now 
assume that
\[
 y_1 = (\log y)^{1/4}, \quad 
 y_2 = y(\log_3 y)^{-1}, \quad 
 y < 2z \le y(\log_2 y)(\log_3 y)^{-1}.
\]
Then by \eqref{eq:5.16} we have 
\[
 |\{p \in (y_2,y] : p \not\in \Z\}| - |\Res_2|
   \ge \frac{y}{\log y}
        \br{1 - \e^{-\gamma}}
               + \Oh[\frac{y}{(\log y)(\log_3 y)}].
\]
The right-hand side tends to infinity with $y$, and so 
\[
 |\{p \in (y_2,y] : p \not\in \Z\}| > |\Res_2| + k
\]
if $y$ is sufficiently large in terms of $k$, as we now assume.

Applying Lemma \ref{lem:5.2}  we see that if  $\{h_1,\ldots,h_k\}$ 
is an arbitrary admissible  $k$-tuple contained in  $\Res_2$, then 
there are integers $\{a_p : p \mid 2\ell P_3\}$ such that 
\[
   \{h_1,\ldots,h_k\} 
  = \Res_2
     \setminus
      \textstyle 
       \bigcup_{p\,\mid\, 2\ell P_3} \{g : g \equiv a_p \bmod p\}. 
\]
Therefore, since   
$\{p \le y : p \not\in \Z\} 
= \{p \le y : p \mid 2\ell P_1P_2P_3\}$,
to  complete  the  proof  it  suffices  to show  that  there is an 
admissible  $k$-tuple  $\{ h_1, \ldots, h_k \}  \subseteq  \Res_2$ 
satisfying \eqref{eq:5.8} and \eqref{eq:5.9}.

To this  end, let  $A \bmod P_1$  be  the  arithmetic  progression 
modulo $P_1$ such that for all $p \mid P_1$,
\begin{align*}
 A \equiv 
    \begin{cases}
     -1           & \text{if $a_p \equiv 1 \bmod p$,}     \\
     \phantom{-}1 & \text{if $a_p \not\equiv 1 \bmod p$.}
    \end{cases}
\end{align*}
(Recall that  $2 \nmid P_1$, so $-1 \not\equiv 1 \bmod p$  for all 
$p \mid P_1$.)
Then  $(A, P_1) =  1$  and   the   primes   $h  \in  (y,z]$   with 
$h \equiv A \bmod P_1$ all lie in $\Res_2$.
(If   $h  \in  (y,z]$   is   prime  then  $(h, P_2)  =  1$,  hence 
$h \in \Res_1$.)
We choose the elements of our  $k$-tuple  from among those primes.
We  note that by the prime  number  theorem  and  \eqref{eq:5.13}, 
$P_1  =  \e^{ (1  +  \oh(1))y_1 }$ as $y$  (and hence $y_1$) tends 
to infinity.
Thus,  if  $h$  and  $h' < h$  are   any  two   such  primes  then
\[
 p \mid h - h'                               \quad \implies \quad 
 \text{$p \mid P_1$ or $p \mid(h - h')/P_1$} \quad \implies \quad 
 p \le \max\{y_1,z/P_1\} < y 
\]
if $y$  is large enough, as we assume, so any  $k$-tuple of primes 
$\{h_1,\ldots,h_k\}$ chosen in this way satisfies \eqref{eq:5.8}.
Moreover,  such  a   $k$-tuple  of  primes  is  admissible   since 
$\min\{h_1,\ldots,h_k\} > k$ (we assume that $y > k$).

By Chebyshev's bound we have 
$\sum_{p \le y_1} \log p < 2y_1$, whence 
$P_1 < \e^{2(\log y)^{1/4}}$.
Thus, by \eqref{eq:5.4} we have 
\[
 \sums[u < p \le u + \Delta][p \equiv A \bmod P_1] \log p
  = \frac{\Delta}{\phi(P_1)} 
     + \Oh[y\exp\br{-c'\sqrt{\log y}}],
\]
uniformly for  $2 \le \Delta \le y \le u \le z$, where  $c'$ is an 
absolute  constant, except  possibly if  $P_1$  is a multiple of a 
certain  integer  $q_1$  whose  greatest  prime  factor  satisfies  
$\gp{q_1} \gg \log_2 y \gg \log y_1$.
We now specify $\ell$ accordingly so that  this possibility cannot 
arise.%
\footnote{%
If $q_1$ does not exist we can either let $\ell = 1$ or choose any 
$\ell \gg \log y_1$.
Indeed,  we could remove any set $\Z_1$ of primes  from $P_1$ such 
that $\sum_{\ell \in \Z'} 1/\ell \ll 1/\log y_1$, without 
affecting the proof.
}
We let
$
 \Delta = y\e^{-(\log y)^{1/4}}.
$
Thus, 
\[
 \sums[u < p \le u + \Delta][p \equiv A \bmod P_1] \log p
   \gg y\exp\br{-3(\log y)^{1/4}}  
\]
uniformly for  $y \le u \le z$, and  the  left-hand  side is a sum 
over at least $k$ primes if  $y$ is sufficiently large in terms of 
$k$, as we now assume. 

Recall that  $\beta_k \ge \cdots \ge \beta_1 \ge 0$ are given real 
numbers.
We now assume that  $y$  is large enough in terms of  $\beta_k$ so 
that 
\[
 2(1 + (1 + \beta_k)) \le (\log_2 y)(\log_3 y)^{-1},
\]
and we let $x$ be any number such that $x \ge 1$ and 
\[
 2y(1 + (1 + \beta_k)x) 
   \le 2z 
    \le y(\log_2 y)(\log_3 y)^{-1}.
\]
For $1 \le i \le k$, let 
\[
 u_i = \beta_i xy + y,
\]
so that the intervals  $(u_i, u_i + \Delta]$  are all contained in 
$(y,z]$.
For   each  $1  \le i  \le  k$   in   turn,  we   choose  a  prime 
$h_i \in (u_i, u_i + \Delta]$  with  $h_i \equiv A \bmod P_1$ a nd 
$h_i \ne h_j$ for any $j \le i$. 
This is possible since each interval contains at least  $k$ primes 
that are congruent to $A \bmod P_1$. 
We see that the resulting set  $\{h_1,\ldots,h_k\}$  is admissible 
since  no  element is  congruent  to  $a_p \bmod p$  for any prime 
$p \le k$.  
Moreover, $h_i = u_i + \Oh[\Delta]$, which  gives \eqref{eq:5.9}.
\end{proof}


\section{Deduction of Theorems \ref{thm:1.1} and \ref{thm:1.3}}
 \label{sec:6}

\subsection*{Deduction of Theorem \ref{thm:1.3}}
 \label{subsec:6.1}

Fix $k \ge m \ge 2$ and $\epsilon = \epsilon(k,m) \in (0,1)$, 
with $k$ a sufficiently  large multiple of $(8m^2 + 8m)(8m + 1)$, 
and $\epsilon$ sufficiently small, in the sense of Theorem 
\ref{thm:4.3}.

Fix real numbers $\beta_{8m^2+8m} \ge \cdots \ge \beta_1 \ge 0$. 
Let $\bb\in\RR^k$ be given by 
\[
 \bb 
  = 
   (\beta_1,\ldots,\beta_1,
     \beta_2,\ldots,\beta_2,
      \ldots,
       \beta_{8m^2 + 8m},\ldots, \beta_{8m^2 + 8m}),
\]
where there are $k/(8m^2 + 8m)$ consecutive copies of each $\beta_i$ 
appearing in $\bb$.
Let $N \ge N(k,m,\epsilon)$ (as in Theorem \ref{thm:4.3}) and  put
\[
 x = \epsilon^{-1},               \quad 
 y = w = \epsilon\log N,          \quad 
 z = y(\log_2 y)(2\log_3 y)^{-1}.
\]
If  $N \ge N(\bb,k,m,\epsilon)$  is large enough in terms of 
$\bb$ and $k$, then with $y(\bb,k)$ as in Lemma \ref{lem:5.2} we 
have
\[
 x > 1,                                                   \quad 
 y \ge y(\bb,k),                                          \quad
 2y(1 + (1 + \beta_k)x) \le 2z \le y(\log_2 y)(\log_3 y)^{-1}.  
\]
Let   $\Z[ N^{4\epsilon} ]$   be  given  by \eqref{eq:4.8} and let  
$W = \prod_{p \le w,\, p \, \nmid \, \Z[N^{4\epsilon}]} p$. 
Let us define $\Z$ by putting $\Z = \emptyset$ if 
$\Z[ N^{4\epsilon} ] = 1$ and $\Z = \Z[ N^{4\epsilon} ]$ if 
$\Z[ N^{4\epsilon} ] \ne 1$.
Then \eqref{eq:4.7} implies that the condition \eqref{eq:5.6} is 
satisfied since $\log z \ll \log_2 N^{\epsilon}$.

The hypotheses of Lemma \ref{lem:5.2}  being verified, we conclude 
that there exists a set  $\{a_p : p \le y, \, p \not\in \Z\}$  and 
an admissible $k$-tuple $\{h_1,\ldots,h_k\}$ such that 
\begin{align}
 \label{eq:6.1}
 \textstyle
 \{h_1,\ldots,h_k\}
  = \br{(0,z] \cap \ZZ}
     \setminus
      \bigcup_{p \le y,\, p \not\in \Z}
       \{g : g \equiv a_p \bmod p\}. 
\end{align}
Moreover, for $1 \le i < j \le k$, 
\begin{align}
 \label{eq:6.2}
  p \mid (h_j - h_i) \implies p \le y = w
\end{align}
and we have the partition
\begin{align}
 \label{eq:6.3}
  \A = \A_1 \cup \cdots \cup \A_{8m^2 + 8m}
\end{align}
such that for each $j \in \{1,\ldots,8m^2 + 8m\}$ we have
\begin{align}
 \label{eq:6.4}
 h = (\beta_j + \epsilon + \oh[1])\log N
   \quad 
    \text{for all $h \in \A_j$.}
\end{align}
We let $b$ be an integer satisfying
\begin{align}
 \label{eq:6.5}
 b \equiv -a_p \bmod p
\end{align}
for all $p \le y$, $p \not\in \Z$.

We now wish to apply part (ii) of Theorem \ref{thm:4.3}. 
We have  $0 < h_i \le z < N$  for each  $i$, so \eqref{eq:4.16} is
satisfied.
We see \eqref{eq:6.2} and \eqref{eq:6.3} give the conditions 
\eqref{eq:4.17} and \eqref{eq:4.18}. 
Finally,   by   \eqref{eq:6.1}   and   \eqref{eq:6.5},   we   have 
$(\prod_{i = 1}^k(b + h_i), W)  =  1$, and so \eqref{eq:4.19} also 
holds. 
We  conclude   that   there   exists   some   $n \in (N,2N]$  with 
$n \equiv b \bmod W$ and some set $\{i_1,\dots,i_{m+1}\}$ such that
\begin{align*}
 | \A_i(n) \cap \PP | 
   & = 1
   \quad
    \text{for all $i \in \{i_1,\ldots,i_{m + 1}\}$},
 \\
 | \A_i(n) \cap \PP | 
   & \le 1
     \quad
      \text{for all $i_1<i<i_{m+1}$.}
\end{align*}

For any $n  >  y$ such that $n  \equiv  b \bmod W$, \eqref{eq:6.1} 
implies that 
\[
 (n,n + z] \cap \PP = \A(n) \cap \PP,
\]
because if  $g \in (0,z]$  and  $g \not\in \{h_1,\ldots,h_k\}$, we 
have   $g + n  \equiv  a_p  -  a_p  \equiv 0  \bmod p$  for   some  
$p \le w$ with $p \in \Z$. 
The primes in $\A(n)$  are therefore  consecutive 
primes.
Therefore there are indices  $J(1)  <  \cdots  <  J(m+1)$  for 
which $|\A_{J(i)}(n) \cap \PP| = 1$ and the primes counted here 
form a sequence of $m + 1$ consecutive primes.
Thus, by  \eqref{eq:6.4},  and  since  $N \le n + h_i \le 3N$, we 
have for some $r$ that 
\begin{align}
 \label{eq:6.6}
  \frac{ p_{r + i + 1} - p_{r+i} }{\log{p_{r+i}} } 
 = \beta_{J(i+1)} - \beta_{J(i)} + \oh[1],
\end{align}
for $1 \le i \le m$.

Letting  $N$ tend to infinity, we see that for infinitely many $r$  
there exists some  $1 \le J(1) <  \cdots  < J(m+1) \le 8m^2+8m$ 
such that \eqref{eq:6.6} holds.
Since there are at most $\Oh[1][k]$ distinct ways to choose the
indices $J(i)$, at least one pattern of indices occurs infinitely 
often.
For that pattern we have  \eqref{eq:6.6}  for infinitely many $r$, 
and so 
$
 (\beta_{J(2)} - \beta_{J(1)},
   \ldots,
    \beta_{J(m+1)} - \beta_{J(m)})
     \in 
      \LP_m.
$
\qed

\subsection*{Deduction of Theorem \ref{thm:1.1}}
 \label{subsec:6.2}
The argument is essentially the same as that for Theorem 
\ref{thm:1.3}, but uses part (i) Theorem \ref{thm:4.3} instead of 
part (ii). 

We take $k$ to be a sufficiently large multiple of $9\times 17$. 
Given $\beta_9 \ge \cdots \ge \beta_1 \ge 0$, we construct $\A$ as 
before and form a partition $\A = \A_1 \cup \cdots \cup \A_9$, so 
that each $\A_i$ has size $k/9$ and all elements of $\A_i$ have 
size $(\beta_i + \epsilon + \oh[1])\log N$.
Applying part (i) of Theorem \ref{thm:4.3} (with $m = 1$) then 
shows that there is an $n \in (N,2N]$ such that 
$|\A_i(n) \cap \PP| \ge 1$, $|\A_i(n) \cap \PP| \ge 1$ for some 
$1 \le i < j \le 9$. 
As before, our construction shows that there are no other primes 
in $[n,n+z]$, and so there must be two consecutive primes $p_r$, 
$p_{r+1}$ of the form $n+h,n+h'$ with $h,h'$ in different sets 
$\A_i$. 
But then we have
\[
  \frac{ p_{r + 1} - p_{r} }{\log{p_{r}} } 
  = \beta_j - \beta_i + \oh[1],
\]
for some $i < j$. 
Since this occurs for every large $N$, we obtain the result.
\qed


\section{Concluding remarks}
 \label{sec:7}

If the statement of  Theorem \ref{thm:4.2}  held with an arbitrary 
fixed $\theta \in (0,1)$, then one could apply a minor 
adaptation of the Maynard--Tao argument to show that given 
$\beta_1,\ldots,\beta_5$, there are infinitely many $n$ such that 
at least two of the integers in 
$
 \{n + h_1,\ldots,n + h_5\}
$ 
are primes with $h_i \approx \beta_i\log{n}$, and so we could take 
$k = 5$ in Theorem \ref{thm:1.1}.
This would give  $\lambda([0,T] \cap \LP) \ge (1 - \oh[1]) \, T/4$
as $T \to \infty$, and $\lambda([0,T] \cap \LP) \ge 3T/25$ for all 
$T \ge 0$, in place of \eqref{eq:1.3} and \eqref{eq:1.4}.
 
We can replace the logarithm in  \eqref{eq:6.6}, hence in Theorems 
\ref{thm:1.1} and \ref{thm:1.3}, by a function 
$f  :  [N_0,\infty)  \to  [1,\infty)$ that is a monotone, strictly 
increasing,   unbounded  and   satisfies  $f(N)  \le  \log N$  and 
$f(2N) - f(N) \ll 1$ for all $N \ge N_0$.
In fact we can let  $f(N)/\log N$ tend to infinity slowly (as fast 
as $\log_3 N/\log_4 N$). 
It is  possible to improve  upon this, and it would be of interest 
to see how  fast  $f(N)$  can  grow  while  Theorem  \ref{thm:1.1} 
remains valid.
This question has recently been addressed by Pintz \cite{PIN2}.


\end{document}